\newtheorem{theorem}{Theorem}[section]
\newtheorem{corollary}[theorem]{Corollary}
\newtheorem{lemma}[theorem]{Lemma}
\newtheorem{proposition}[theorem]{Proposition}
\theoremstyle{definition}
\newtheorem{definition}[theorem]{Definition}
\newtheorem{remark}[theorem]{Remark}
\newtheorem{example}[theorem]{Example}
\newcommand{\Cone}{\mathrm{Cone}}
\newcommand{\coker}{\mathrm{coker}}
\renewcommand{\ker}{\mathrm{ker}}
\renewcommand{\Im}{\mathrm{Im}}
\renewcommand{\H}{\mathrm{H}} % 同调函子 H
\newcommand{\Hl}{\mathrm{H}_{\lambda}} % \lambda 纯同调函子
\newcommand{\Id}{\mathrm{Id}} % 恒等符号
\newcommand{\Hom}{\mathrm{Hom}} %% 一般的Hom
\newcommand{\HomA}{\mathrm{Hom}_{\calA}} %% 范畴 A 中的Hom
\newcommand{\calA}{\mathcal{A}} %% 范畴 A
\newcommand{\CA}{\mathbf{C}(\calA)}
\newcommand{\CAast}[1]{\mathbf{C}^{\ast}(\calA)}
\newcommand{\KA}{\mathbf{K}(\calA)}
\newcommand{\KAs}[1]{\mathbf{K}^{#1}(\calA)}  %% 同伦范畴有界复形
\newcommand{\KAast}[1]{\mathbf{K}^{\ast}(\calA)}
\newcommand{\DA}{\mathbf{D}(\calA)} %% A的导出范畴
\newcommand{\DlA}{\mathbf{D}_{\lambda}(\calA)} %% A的 \lambda-pure 导出范畴
\newcommand{\DlAs}[1]{\mathbf{D}_{\lambda}^{#1}(\calA)}%% A的 有界 \ lambda-pure 导出范畴
\newcommand{\DlAast}[1]{\mathbf{D}_{\lambda}^{\ast}(\calA)}
\newcommand{\DlsgA}{\D_{\lambda\textrm{-}sg}^{b}({\cal A})}%%A的 \lambda-pure 奇点范畴
\newcommand{\C}{\mathbf{C}}
\newcommand{\D}{\mathbf{D}}
\newcommand{\K}{\mathbf{K}}
\newcommand{\f}{\bm{f}}
\newcommand{\g}{\bm{g}}
\newcommand{\h}{\bm{h}}
\newcommand{\s}{\bm{s}}
\newcommand{\uu}{\bm{u}}
\newcommand{\vv}{\bm{v}}
\renewcommand{\t}{\bm{t}}
\renewcommand{\a}{\bm{a}}
\renewcommand{\b}{\bm{b}}
\newcommand{\X}{{\bm{X}}}
\newcommand{\Y}{{\bm{Y}}}
\newcommand{\Z}{{\bm{Z}}}
\newcommand{\p}{{\bm{P}}}
\renewcommand{\P}{{\bm{P}}}
\newcommand{\Q}{{\bm{Q}}}
\newcommand{\lPP}{\mathcal{PP}_\lambda}
\newcommand{\GlPP}{\mathcal{G}(\mathcal{PP}_\lambda)}
\newcommand{\infl}{\inf_{\lambda}}
\newcommand{\supl}{\sup_{\lambda}}
\newcommand{\plext}{\rm{P}_{\lambda}\rm{ext}}
\newcommand{\plpdA}{\mathrm{p}_{\lambda}.\mathrm{pd}_{\cal A}}
\newcommand{\plgldA}{\mathrm{P}_{\lambda}.\mathrm{gldim}{\cal A}}
\newcommand{\ra}{\rightarrow}
\newcommand{\Ra}{\Rightarrow}
\journal{Chinese Annals of Mathematics, Series B}
\begin{document}

%\begin{comment}
\begin{frontmatter}

\title{$\lambda$-pure global dimension of Grothendieck categories and some applications}
%\tnotetext[label0]{This is only an example}

\author[label1,label2]{Xi Wang }
\address[label1]{School of Mathematics, Statistics and Mechanics, Beijing University of Technology, Beijing 100124, China}
\address[label2]{College of Mathematics, Sichuan University of Arts and Sciences,Dazhou 635000,China}

\cortext[cor1]{Corresponding author: Hailou Yao }
%\fntext[label3]{I also want to inform about\ldots}
%\fntext[label4]{Small city}

\ead{xwang1233@163.com}
%\ead[url]{author-one-homepage.com}

\author[label1]{Hailou Yao}

\ead{yaohl@bjut.edu.cn}

\author[label2]{Lei Shen}
\ead{shenleisasu@163.com}

\begin{abstract}
We study the $\lambda$-pure global dimension of a Grothendieck category $\cal A$,
and provide two different applications about this dimension.
We obtain that if the $\lambda$-pure global dimension $\plgldA<\infty$, then
(1) The ordinary bounded derived category (where $\cal A$ has enough projective objects) and the bounded $\lambda$-pure one differ only by a homotopy category;
(2) The $\lambda$-pure singularity category $\DlsgA =0$.
At last, we explore the reason why the general construction of classic Buchweitz-Happel Theorem is not feasible for $\lambda$-pure one.
\end{abstract}

\begin{keyword}
$\lambda$-pure acyclic complex, $\lambda$-pure derived category, $\lambda$-pure global dimension, $\lambda$-pure singularity category.

2020MSC:  18G10  \sep 18G20  \sep 18G80
\end{keyword}

\end{frontmatter}

%\begin{figure}[b]
%\rule[-2.5truemm]{5cm}{0.1truemm}\\[2mm]
%{\small Foundation item: Supported by National Natural Science Foundation of China(12071120)}
%\end{figure}
%% main text
\section{Introduction}

The $\lambda$-purity and $\lambda$-pure derived category have been widely explored in the literatures, see for instance Neeman \cite{Neeman1990}, Ad\'{a}mek-Rosicky \cite{Adamek1994, Adamek2004}, Izurdiaga \cite{Izurdiaga2021}, and Shen \emph{et.al.} \cite{Shen2022}.
As a continuation of the work in \cite{Wang}, we consider the $\lambda$-pure projective dimension and $\lambda$-pure global dimension, and find some interesting applications.

Specifically, we first introduce some necessary properties about $\lambda$-purity in Section 2, and also give the definitions of $\lambda$-pure projective dimension and the $\lambda$-pure global dimension of a Grothendieck category $\cal A$.
By definition, the complex $\X\in\DlA$ is of $\lambda$-pure projective dimension at most $n$, written by $\plpdA \X\leq n$, if there exists a $\lambda$-pure projective resolution $\P\ra \X$ with $P^i=0$ for any $i<-n$;
the $\lambda$-pure global dimension of $\cal A$,  written by $\plgldA$, is the supremum of the $\lambda$-pure projective dimension of all objects in $\cal A$.
Some similar dimension definitions can be found in literatures Huang\cite{Huang2013}, Li-Huang\cite{Lihuanhuan2015} and Zheng-Huang\cite{HZY2016}.

Neeman\cite{Neeman1990} investigated the derived category of Quillen's exact categories (see B\"{u}hler\cite{Theo2010} and Keller\cite[Appendix A]{Keller1990}) in 1990,
which is the Verdier quotient of a homotopy category by a thick subcategory.
By \cite{Wang}, let $\K_{\lambda}{(\cal A)}$ be a subcategory of $\KA$ consisting of all $\lambda$-pure acyclic complexes, then it is thick.
Naturally, the corresponding Verdier quotient is the $\lambda$-pure derived category of $\cal A$, that is, $\DlA:= \KA/\K_{\lambda}{(\cal A)}$.
Moreover, $\DlAs{\ast}:= \KAs{\ast}/\K_{\lambda}^{\ast}{(\cal A)}$, where $\ast \in \{ b, -, +\}$.
In Section 3, we suppose the Grothendieck category $\cal A$ have enough projective objects.
Then we compare the ordinary  derived category and the $\lambda$-pure one,
and we get the following results.

Let $\cal A$ be a Grothendieck category with enough projective objects.
We have a triangle equivalence
$$\D^-({\cal A})\simeq \D_{\lambda}^{-}{(\cal A)}/\K_{ac}^{-}({\lPP}).
$$.

For the case of bounded one,
we obtain that if $\plgldA$ is finite, then there exists a triangle equivalence
$$\D^{b}({\cal A}) \simeq \D_{\lambda}^{b}{(\cal A)}/\K_{ac}^{b}(\lPP),
$$
where $\K_{ac}^{b}(\lPP)$ is the subcategory of $\K^b(\cal A)$ consisting of all acyclic cochain complexes of $\lambda$-pure projective objects.

Assume that $\cal A$ is an abelian category with enough projective objects.
Denoted by $\K^{b}(\cal P)$ the subcategory of $\K^b(\cal A)$ consisting of all complexes of projective objects.
Then by \cite[Lemma 5.1.10]{ZP2015}, it is a triangulated subcategory of the bounded derived category $\D^{b}({\cal A})$.
And it is also a thick one by Buchweitz.
Thus, the Verdier quotient category $\D^{b}({\cal A})/\K^{b}(\cal P)$, in fact, is the usual singularity category of the abelian category $\cal A$, denoted by $\D_{sg}^{b}(\cal A)$.
Specially, let $R$ be a ring, ${\cal A}=R\textrm{-Mod}$, then $\D_{sg}^{b}{(R)}=0$ if and only if the projective dimension of any $R$-module is finite.
More generally, Christensen \emph{et.al.} \cite{Christensen2023} explored the singularity category of an exact category, where the exact structure they chosen is ordinary one, and they generalized the above result to the exact category.
The relative version of singularity categories was studied by Chen \cite{Chen2011}, Li-Huang \cite{Lihuanhuan2015, Lihuanhuan2020}, respectively.
Here we concern the $\lambda$-pure version in Section 4, that is, the $\lambda$-pure singularity category of the Grothendieck category $\cal A$, denoted by ${\DlsgA} :=\DlAs b/\K^{b}(\lPP)$.
And we also get the result that ${\DlsgA}=0$ if and only if the $\lambda$-pure global dimension $\plgldA$ is finite.

The classical  Buchweitz-Happel Theorem (and its inverse) says that the singularity category is equivalent to a stable category under some conditions, see Theorem \ref{theoremB-H} for more details.
Moreover,
Gao-Zhang\cite{GZ2010}, Chen\cite{Chen2011}, and Li-Huang\cite{Lihuanhuan2015, Lihuanhuan2020} got the relative version of Buchweitz-Happel Theorem, respectively.
However, we fail to get the $\lambda$-pure one.
We state the reason why the general construction of classic Buchweitz-Happel Theorem is not feasible for $\lambda$-pure version at the end of Section 4.

\section{$\lambda$-pure projective dimension}

We first explain some necessary notions which will be used repeatedly in this manuscript. And more details can be found in \cite{Wang}, because this manuscript is its continuation.

In what follows, $\cal A$ denotes a Grothendieck category, i.e., $\cal A$ is an abelian category having all coproducts and a generator such that exact directed colimits exist.
And $\lambda$ denotes an infinite regular cardinal, i.e., it is not a sum of a smaller number of smaller cardinals. In other words, the cardinal $\lambda$ satisfies: for any family $(\mu_\alpha)_{\alpha<\mu}$ of cardinals, where $\mu<\lambda$ and  $\mu_\alpha<\lambda$ for each $\alpha<\mu$, one always has $\sup_{\alpha<\mu} \mu_\alpha=\bigcup_{\alpha<\mu} \mu_\alpha<\lambda$.
The least infinite cardinal is $\omega=\aleph_0$.

By \cite[Definition1.13(1)]{Adamek1994}, a poset is called $\lambda$-directed provided that every subset of cardinality smaller than $\lambda$ has an upper bound.
A diagram whose scheme is a $\lambda$-directed poset is called a $\lambda$-directed diagram, and its colimit is called a $\lambda$-directed colimit.

An object $F$ in a category  $\cal A$ is $\lambda$-presentable if the functor
$\Hom_{\cal A}(F,-)$ commutes with $\lambda$-directed colimits in $\cal A$.

A short exact sequence $\xi :\xymatrix@C=0.5cm{
  0 \ar[r] & A \ar[r]^{f} & B \ar[r]^{g} & C \ar[r] & 0 }$
is \emph{$\lambda$-pure}, if for any $\lambda$-presentable object $F$, $\HomA(F, \xi)$ is an exact sequence.

An object $X$ in $\calA$ is called $\lambda$-pure projective, if $X$ is projective with respect to $\lambda$-pure exact sequence. And $\lPP(\calA)$, or simply $\lPP$, denotes the class of all $\lambda$-pure projective objects.
Note that the $\omega$-purity is just the classical purity.

Let $\X, \Y$ be two complexes in $\CA$, by definition, the \emph{total complex} $\Hom_{\cal A}(\X,\Y)$ is a complex with
the component
$$\Hom_{\cal A}(\X,\Y)^n :=\prod_{i\in \mathbb{Z}}\Hom_{\cal A}(X^i,Y^{i+n})$$
and the differential $d^n$ is defined by $$d^n(f)=(d_{\Y}^{n+i}f^i+(-1)^{n+1}f^{i+1}d_{\X}^{i})_{i\in \mathbb{Z}}$$
for any $f\in \Hom_{\cal A}(\X,\Y)^n$.
A useful formula, which connects the homology and homotopy, is $\H^n \Hom_{\cal A}(\X,\Y)= \Hom_{\KA}(\X,\Y[n])$ with $n\in \mathbb{Z}$ for any $\X, \Y\in \CA $.

A complex $\X\in \CA$ is $\lambda$-pure acyclic if and only if the complex $\Hom_{\cal A}(P, \X)$ is acyclic (or exact) for any $\lambda$-presentation (more generally, $\lambda$-pure projective) object $P$.

Put $\K_{\lambda}{(\cal A)}:=\{\X\in \KA| \X\textrm{ is }\lambda\textrm{-pure acyclic}\}$.
Then one can verify that $\K_{\lambda}{(\cal A)}$ is a thick subcategory of $\KA$.
The $\lambda$-pure derived category of $\cal A$ defined by a Verdier quotient category
$$\DlA: = \KA/\K_{\lambda}(\cal A),$$
see \cite{Wang}.
Moreover,
$$\DlAs{\ast}:= \KAs{\ast}/\K_{\lambda}^{*}(\cal A),$$
where $\ast \in \{ b, -, +\}$.

\begin{remark}
Note that Gillespie \cite{Gillespie2016} considered the derived category with respect to a generator; as an application, they presented the $\lambda$-pure derived category in \cite[Section 4.6]{Gillespie2016}.
In fact, Gillespie got the $\lambda$-pure derived category by a special model structure, which is essentially the same as our definition.
However, Gillespie's attention is focused on the construction of model structure, rather than the structure of the $\lambda$-pure derived category itself, which is also the main purpose of our manuscript.
\end{remark}

A cochain map $\f: \X\ra \Y$ in $\CA$ is called a $\lambda$-pure quasi-isomorphism if the mapping cone $\Cone (\f)$ is $\lambda$-pure acyclic.

For any $\X\in \CA$, a $\lambda$-pure quasi-isomorphism $\f: \p\ra \X$ is called a $\lambda$-pure projective resolution of $\X$, if $\p$ is a  complex of $\lambda$-pure projective objects.

Set
\begin{align*}\K_{\lambda}^{-, b}{(\cal X)}
&=\{\X \in \K^{-}{(\cal X)} \mid \exists~ n(\X)\in \mathbb{Z},\textrm{ such that }\Hl^{i}(\X)=0 \\
&\textrm{~~for all~~} i\leq n(\X)  \},
\end{align*}
where $\Hl^{i}(\X)=0$ for all $i\leq n(\X)$ means the complex $\X$ is $\lambda$-pure acyclic in degree $\leq n(\X)$.
In fact,
\begin{align*}
\X\in \K_{\lambda}^{-, b}(\cal X) &\Longleftrightarrow \textrm{ for any } \lambda\textrm{-presentable (or } \lambda \textrm{-projective) object }F,  ~\\
&\Hom_{\cal A}(F,\X)\textrm{ is acyclic in degree } \leq n(\X);\\
&\Longleftrightarrow \textrm{ for any } \lambda\textrm{-presentable (or } \lambda \textrm{-projective) object }F,  ~\\
&\H^i\Hom_{\cal A}(F,\X)=0 \textrm{ for all } i\leq n(\X).
\end{align*}

%Since $\Hl^{i}(\X[j])=\Hl^{i+j}(\X)$, we obtain that $\Hl^{t}(\X[j])=0$ for all $t\leq n-j$.

Set
\begin{center}
$\supl\X:=\sup\{i\in\mathbb{Z}, \H_{\lambda}^{i}(\X)\neq 0
\},$
\end{center}
and
\begin{center}
$\infl\X:=\inf\{i\in\mathbb{Z}, \H_{\lambda}^{i}(\X)\neq 0
\}.
$
\end{center}

\begin{remark}
If $\X\ra \Y$ is a $\lambda$-pure quasi-isomorphism, then $\supl\X=\supl\Y$ and $\infl\X=\infl\Y$, which follows by direct calculating.
\end{remark}

\begin{proposition}\label{contractible}
For any object $X$ in $\cal A$,
assume the complex
$$\xymatrix@C=0.5cm{
 \P: &\cdots  \ar[r] & P_1 \ar[r]^{d_1} & P_0 \ar[r]^{d_0} & X \ar[r] & 0 }
$$
is a $\lambda$-pure acyclic complex with $P_i$ $(i=0,1,\cdots)$ $\lambda$-pure projective.
Then we have that $X$ is $\lambda$-pure projective if and only if $\P$ is contractible.
\end{proposition}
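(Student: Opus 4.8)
The plan is to establish the two implications separately, the content being concentrated in the ``only if'' direction. I would first dispose of ``$\P$ contractible $\Rightarrow X\in\lPP$''. A contracting homotopy restricts, at the rightmost term, to a morphism $s\colon X\to P_0$ with $d_0 s=\mathrm{id}_X$, so $d_0$ is a split epimorphism and $X$ is a direct summand of $P_0$. Because $P_0\in\lPP$ and $\lPP$ is closed under direct summands --- given a $\lambda$-pure epimorphism $g\colon B\to C$ and a map $f\colon X\to C$, one lifts $f\pi$ along $g$ using the $\lambda$-pure projectivity of $P_0$ and precomposes the lift with the inclusion $X\hookrightarrow P_0$ --- we conclude $X\in\lPP$.

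For the converse, write $K_{-1}:=X$ and $K_n:=\ker d_n=\Im d_{n+1}$ for $n\ge 0$, so that $\P$ decomposes into short exact sequences $0\to K_n\to P_n\xrightarrow{\,\bar d_n\,} K_{n-1}\to 0$. The key preliminary step is a lemma: each of these cycle sequences is $\lambda$-pure. To prove it, fix a $\lambda$-presentable object $F$; only the surjectivity of $\Hom_{\cal A}(F,P_n)\to\Hom_{\cal A}(F,K_{n-1})$ requires argument. Given $\varphi\colon F\to K_{n-1}$, composing with $K_{n-1}\hookrightarrow P_{n-1}$ produces a cocycle of the complex $\Hom_{\cal A}(F,\P)$, which is exact since $\P$ is $\lambda$-pure acyclic; the resulting coboundary, corestricted to $K_{n-1}$, is the desired preimage of $\varphi$.

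I would then induct along the sequences. The bottom one, $0\to K_0\to P_0\to X\to 0$, is $\lambda$-pure, and since $X\in\lPP$ the identity $\mathrm{id}_X$ lifts through $d_0$; hence it splits and $K_0$, being a summand of $P_0$, lies in $\lPP$. Feeding this into $0\to K_1\to P_1\to K_0\to 0$ splits it and puts $K_1\in\lPP$, and so on: every $K_n$ is $\lambda$-pure projective and every cycle sequence splits. Fixing compatible retractions $r_n\colon P_n\to K_n$ and sections $s_n\colon K_{n-1}\to P_n$ (so that $r_n\iota_n=\mathrm{id}$, $\bar d_n s_n=\mathrm{id}$, $r_n s_n=0$, $\bar d_n\iota_n=0$), the maps $h_n:=s_{n+1}r_n$ satisfy $d_{n+1}h_n+h_{n-1}d_n=\iota_n r_n+s_n\bar d_n=\mathrm{id}_{P_n}$, exhibiting the contracting homotopy.

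The main obstacle I anticipate is the lemma identifying the cycle sequences as $\lambda$-pure: the inductive splitting and the explicit homotopy are formal once the quotients $K_{n-1}$ may be lifted against, and this hinges entirely on converting the acyclicity of $\Hom_{\cal A}(F,\P)$ into surjectivity at each stage. The final homotopy identity is then only a bookkeeping verification with the four splitting relations.
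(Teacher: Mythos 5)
Your proof is correct and rests on the same mechanism as the paper's: an induction in which $X$ and then each $P_n$ is lifted against the $\lambda$-pure epimorphism $P_{n+1}\to\ker d_n$. The only difference is in packaging --- you first prove explicitly that the cycle sequences are $\lambda$-pure (a fact the paper uses but merely asserts, in the form ``$\pi_1$ is a $\lambda$-pure epimorphism''), deduce inductively that every syzygy $K_n$ is $\lambda$-pure projective so that the complex decomposes into split short exact sequences, and read off the contracting homotopy from the splitting data, whereas the paper constructs the homotopy maps $t_i$ directly from the successive liftings.
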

\begin{proof}
Suppose $\P$ is contractible, it is clear that $X$ is a direct summand of $P_0$. Then $X$ is $\lambda$-pure projective.

Consider the following diagram
$$\xymatrix@C=0.5cm{
  &\cdots \ar[r]
&
  P_1
   \ar[rr]^{d_1} \ar[d]_{\Id_{P_1}}
&&
  P_0
   \ar[rr]^{d_0} \ar[d]^{\Id_{P_0}}\ar@{-->}[dll]_{t_{0}} \ar@{-->}[ddl]_(0.4){j_{0}}
&&
  X
  \ar[r]  \ar[d]^{\Id_X} \ar@{-->}[dll]^{t_{-1}}
&
0 \\
  &\cdots \ar[r]
&
  P_1
   \ar[rr]_{d_1} \ar@{->>}[dr]_{\pi _1}
&&
  P_0
   \ar[rr]_{d_0}
&&
  X
  \ar[r]
&
0 \\
&&&
K_0\ar@{ >->}[ur]_{i_0}
&&&&
  }
$$
where $i_0=\ker d_0$.
Since $X\in \lPP$, there exists a morphism $t_{-1}:X\ra P_0$ such that $\Id_X=d_0t_{-1}$.
Then by the equivalences $d_0\Id_{P_0}=\Id_Xd_0=d_0t_{-1}d_0$, we have $d_0(\Id_{P_0}-t_{-1}d_0)=0$, that is, there exists a morphism $j_0:P_0\ra K_0$ such that $i_0j_0=\Id_{P_0}-t_{-1}d_0$.
Since $P_0\in \lPP$ and $\pi_{1}$ is a $\lambda$-pure epimorphism, we have $j_0=\pi_1t_0$ with $t_0:P_0\ra P_1$.
Hence $\Id_{P_0}=t_{-1}d_0+i_0\pi_1t_0=t_{-1}d_0+d_1t_0$.
Denote $X=P_{-1}$, $t_{-2}=0$. Then by induction, there exists $t_{i}:P_i\ra P_{i-1}$, such that $\Id_{P_i}=t_{i-1}d_i+d_{i+1}t_i$, $i=-1,0,1,2,\cdots$, which means $\P$ is contractible.
\end{proof}

\begin{corollary}
Let $\P\in \K^{-}(\lPP)$. If $\P$ is $\lambda$-pure acyclic, then $\P=0$ in $\KA$.
\end{corollary}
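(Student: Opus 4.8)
The plan is to reduce the statement directly to Proposition \ref{contractible}. Since $\P$ lies in $\K^{-}(\lPP)$ it is bounded above, so after relabelling its terms we may assume it has precisely the shape
$$\P:\quad \cdots \ra P_{1} \stackrel{d_{1}}{\ra} P_{0} \stackrel{d_{0}}{\ra} P_{-1} \ra 0,$$
where every $P_{i}$ belongs to $\lPP$ and $P_{-1}$ is the top (rightmost) nonzero term. Writing $X:=P_{-1}$, this is exactly the configuration treated in Proposition \ref{contractible}, and by hypothesis the complex is $\lambda$-pure acyclic.

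The decisive observation is that, in contrast to the general setting of Proposition \ref{contractible} where $X$ is an arbitrary cokernel, here the top term $X=P_{-1}$ is itself $\lambda$-pure projective, simply because all terms of $\P$ lie in $\lPP$. Hence the hypothesis $X\in\lPP$ of the proposition is automatically satisfied, and its implication $X\in\lPP \Ra \P \text{ contractible}$ furnishes a contracting homotopy for $\P$ at once; concretely, this is the downward induction building maps $t_{i}\colon P_{i}\ra P_{i-1}$ with $\Id_{P_{i}}=t_{i-1}d_{i}+d_{i+1}t_{i}$, started from $t_{-1}$ using the $\lambda$-pure projectivity of $X$.

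To conclude, I recall that a contractible complex has null-homotopic identity morphism, hence is isomorphic to the zero object in $\KA$. Therefore $\P=0$ in $\KA$, as claimed.

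I do not expect any serious obstacle here: the corollary is essentially a specialization of Proposition \ref{contractible}, and the only point requiring care is the bookkeeping which guarantees that a genuine top term $X$ exists so that the hypotheses of the proposition are met. This is exactly what boundedness above supplies, while the contracting homotopy produced by the induction is permitted to run through infinitely many nonzero terms toward the unbounded tail, which causes no difficulty since $\K^{-}$ allows this.
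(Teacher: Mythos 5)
Your proof is correct and follows essentially the same route as the paper: truncate/relabel so the rightmost nonzero term plays the role of $X$ in Proposition \ref{contractible}, observe that it is automatically $\lambda$-pure projective, and conclude contractibility, hence vanishing in $\KA$. The paper's own proof is just a terser version of exactly this reduction.
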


\begin{proof}
Without loss of generality, we set
$$\xymatrix@C=0.6cm{
  \P: & \cdots \ar[r] & P_{n} \ar[r]^{d_n} & \cdots \ar[r] & P_{2} \ar[r]^{~d_{2}} & P_{1}\ar[r]^{~d_{1}} & P_{0}\ar[r] & 0 .}
$$
Then by the above Proposition we have $\P$ is contractible, that is, $\P=0$ in $\KA$.
\end{proof}

More generally, we have the following result.

\begin{proposition}\label{prop204}
Let $\P\in \K(\lPP)$. If $\P$ is $\lambda$-pure acyclic, then $\P=0$ in $\KA$.
\end{proposition}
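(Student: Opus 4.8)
The plan is to prove the stronger statement that $\P$ is contractible, i.e. $\Id_{\P}\simeq 0$ in $\KA$. The first reduction I would make is to reinterpret contractibility through the cycle objects. Writing $Z^n=\ker d^n$, the hypothesis that $\P$ is $\lambda$-pure acyclic says exactly that each short exact sequence
\[
0\to Z^n\to P^n\to Z^{n+1}\to 0
\]
(with the obvious inclusion and the corestriction of $d^n$) is $\lambda$-pure. A routine computation shows that $\P$ is contractible if and only if all of these sequences split, and since each $P^n\in\lPP$, this happens precisely when every cycle $Z^n$ is itself $\lambda$-pure projective (if they split, $Z^n$ is a direct summand of $P^n$; conversely, $Z^{n+1}\in\lPP$ forces a $\lambda$-pure sequence to split). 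Thus the whole problem reduces to showing that the cycles of a $\lambda$-pure acyclic complex with $\lambda$-pure projective components are again $\lambda$-pure projective.

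Next I would isolate the mechanism of Proposition \ref{contractible} as a lemma: if $\Q$ is \emph{bounded above} with all components in $\lPP$, then every cochain map $\Q\to\P$ into the $\lambda$-pure acyclic $\P$ is null-homotopic. This is proved by the same top-down induction as in Proposition \ref{contractible}, each successive lift existing because $d^{n-1}\colon P^{n-1}\to Z^{n}$ is a $\lambda$-pure epimorphism and $Q^n\in\lPP$. Taking $\Q=\P$ recovers the preceding Corollary. \textbf{The main obstacle is that this induction has no starting point once $\P$ is unbounded}: there is no top degree from which to launch the contracting homotopy, and the null-homotopies one obtains on bounded-above pieces are not canonically compatible, so they cannot simply be glued into a homotopy for $\Id_{\P}$. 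This is the genuine content of the proposition, and it is exactly the point at which the ordinary (non-$\lambda$-pure) analogue breaks down.

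To overcome this I would appeal to the structural description of $\lambda$-purity rather than a bare-hands bi-infinite induction. Concretely, a $\lambda$-pure acyclic complex can be written as a $\lambda$-directed colimit of contractible complexes whose components are $\lambda$-presentable; any map out of a contractible complex is automatically null-homotopic, and because $\lambda$-presentable objects are small — maps out of them into a $\lambda$-directed colimit factor through a single stage — one can choose the contractions on the stages coherently and pass to the colimit, thereby splitting each cycle sequence and contracting $\P$. Equivalently, one may argue through the model structure presenting $\DlA$ (cf. the Remark on Gillespie's work): as a complex of $\lambda$-pure projectives, $\P$ is cofibrant, and as a $\lambda$-pure acyclic complex it is trivial, so it is trivially cofibrant and such objects are contractible, whence $\P=0$ in $\KA$. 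I would emphasize that it is precisely this smallness/coherence input, special to $\lambda$-purity, that is unavailable for arbitrary acyclic complexes of projectives — the same asymmetry that later obstructs the $\lambda$-pure Buchweitz–Happel construction.
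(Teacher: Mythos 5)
Your correct observation that the top-down induction of Proposition \ref{contractible} has no starting point for an unbounded complex identifies exactly where the difficulty lies, but neither of the two mechanisms you offer to get past it actually closes the gap. The colimit argument assumes, without proof, that a $\lambda$-pure acyclic complex is a $\lambda$-directed colimit of contractible complexes with $\lambda$-presentable components, and then asserts that smallness lets one ``choose the contractions on the stages coherently and pass to the colimit.'' That last sentence is the entire theorem. A directed colimit of contractible complexes is in general \emph{not} contractible (pure acyclic complexes of flat modules are directed colimits of contractible complexes, yet the pure derived category of flats is nontrivial); what one needs is that the $\lambda$-pure projectivity of the components forces the null-homotopies to assemble, and that coherence problem is precisely the content of the conjecture resolved in Shen--Ding--Wang \cite{Shen2022}. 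The model-structure variant is worse: it is circular. In Gillespie's model structure the cofibrant objects are not all complexes with components in $\lPP$ but the ``DG-$\lambda$-pure-projective'' ones, i.e.\ those complexes of $\lambda$-pure projectives from which every map to a $\lambda$-pure acyclic complex is null-homotopic. (Compare the classical counterexample $\cdots\to\mathbb{Z}/4\to\mathbb{Z}/4\to\cdots$, an acyclic complex of projectives that is not cofibrant in the projective model structure.) Asserting that $\P$ is cofibrant because its components lie in $\lPP$ is therefore equivalent to the statement you are trying to prove.

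The paper does not attempt a self-contained argument here: it quotes \cite[Theorem 1.3]{Shen2022}, which states $\Hom_{\KA}(\K(\lPP),\K_{\lambda}(\calA))=0$, and observes that $\P$ lies in both subcategories, so $\Id_{\P}=0$ in $\KA$. Your preliminary reduction (contractibility of an exact complex with components in $\lPP$ is equivalent to all cycle sequences $0\to Z^{n}\to P^{n}\to Z^{n+1}\to 0$ splitting, hence to all cycles being $\lambda$-pure projective) is correct and is a reasonable reformulation, but to complete the proof you must either cite the Shen--Ding--Wang theorem as the paper does or reproduce its argument; the two paragraphs you give in its place do not substitute for it.
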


\begin{proof}
By \cite[Theorem 1.3]{Shen2022}, we know $\Hom_{\KA}{(\K(\lPP),\K_{\lambda}(\cal A))}=0$.
It is clear that the above $\P$ belongs to both $\K(\lPP)$ and $\K_{\lambda}(\cal A)$.
Therefore, we have $\Id_{\P}=0$, that is, $\P=0$ in $\KA$.
\end{proof}

\begin{definition}\label{def206}
The complex $\X\in\DlA$ is said to be of \emph{$\lambda$-pure projective dimension} at most $n$, written by $\plpdA \X\leq n$, if there exists a $\lambda$-pure projective resolution $\P\ra \X$ with $P^i=0$ for any $i<-n$.
If $\plpdA \X\leq n$ for all $n$, then we write $\plpdA\X=-\infty$, in this case, $\X$ is a $\lambda$-pure acyclic complex.
If there is no $n$ such that $\plpdA\X\leq n$, then we write $\plpdA\X=\infty$.
\end{definition}

The $\lambda$-pure projective dimension can be expressed as
\begin{align*}
\plpdA\X= &-\sup\{\inf\{n\in \mathbb{Z}|P^n\neq 0
\}|\P\ra \X\textrm{ is a } \lambda \textrm{-pure projective }\\
& \textrm{ resolution of } \X\}
\end{align*}

\begin{remark}
For an object $X\in \cal A$, which can be viewed as a stalk complex concentrated in degree 0, $\plpdA X\leq n$ means that there exists a $\lambda$-pure exact sequence
$$
0\ra P_n \ra \cdots \ra P_1\ra P_0\ra X\ra 0
$$
with $P_i (i=0,1,\cdots,n)$ $\lambda$-pure projective.
\end{remark}

\begin{definition}\label{def208}
The \emph{$\lambda$-pure global dimension} of $\cal A$, written by $\plgldA$, is the supremum of the $\lambda$-pure projective dimensions of all objects in $\cal A$.
It can be expressed as
$$\plgldA=\sup\{\plpdA X |X\in \cal A
\}$$
\end{definition}

%Define a functor $\R\Hom_{\cal A}(-,\Y):$
Let $\P\ra X$ be a $\lambda$-pure projective resolution of $X$.
For each integer $i$ and each object $Y$, we define a\emph{ $\lambda$-pure derived functor}, denoted by $\plext_{\cal A}^{i}(X,Y)$, as
$$\plext_{\cal A}^{i}(X,Y):=\H^{i}\Hom_{\cal A}(\textbf{P},Y).$$

\begin{lemma}\label{lem207}
(1)\ \ Let $\xymatrix@C=0.2cm{
   \X\ar[rr] && \Y }$ be a $\lambda$-pure quasi-isomorphism in $\C{(\calA)}$ with $\X \in \K^b{(\calA)}, \Y \in \K^-(\calA)$, then there exists a $\lambda$-pure quasi-isomorphism $\xymatrix@C=0.2cm{
   \Y\ar[rr] && \Z }$ with $\Z \in \K^b(\calA).$

(2)\ \  Let $\xymatrix@C=0.2cm{
   \Y \ar[rr] && \Z }$ be a $\lambda$-pure quasi-isomorphism in $\C{(\calA)}$ with $\Z \in \K^{-}{(\calA)}, \Y \in \K(\calA)$, then there exists a $\lambda$-pure quasi-isomorphism $\xymatrix@C=0.2cm{
   \X\ar[rr] && \Y }$ with $\X \in \K^-(\calA).$

\end{lemma}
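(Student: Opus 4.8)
The plan is to produce $\Z$ (resp. $\X$) as a canonical truncation of $\Y$ and to verify $\lambda$-pure quasi-isomorphy by applying $\HomA(F,-)$ for every $\lambda$-presentable object $F$, exploiting that this functor is left exact. I start with part (2), the clean half. Since $\Z\in\K^-(\calA)$ the invariant $\supl\Z$ is finite, so by the remark that $\lambda$-pure quasi-isomorphisms preserve $\supl$ we have $s:=\supl\Y=\supl\Z<\infty$; hence $\HomA(F,\Y)$ is acyclic in all degrees $>s$. Put $\X:=\tau_{\le s}\Y$, the canonical truncation with its inclusion $\X\hookrightarrow\Y$, which lies in $\K^-(\calA)$. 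Because $\HomA(F,-)$ is left exact it commutes with the kernel occurring in degree $s$, so $\HomA(F,\X)=\tau_{\le s}\HomA(F,\Y)$, and the induced map is the inclusion of a truncation of a complex acyclic above $s$; this is a quasi-isomorphism for every $F$. Thus $\Cone(\X\to\Y)$ is $\lambda$-pure acyclic, i.e. $\X\to\Y$ is the desired $\lambda$-pure quasi-isomorphism.

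For part (1), the hypothesis $\X\in\K^b(\calA)$ makes $t:=\infl\Y=\infl\X$ and $\supl\Y$ finite (if $\X$ is $\lambda$-pure acyclic, take $\Z=0$), so $\HomA(F,\Y)$ is acyclic in all degrees $<t$. Now I must truncate away the tail below by the quotient $\Y\twoheadrightarrow\tau_{\ge m}\Y$, setting $\Z:=\tau_{\ge m}\Y$, which is bounded since $\Y\in\K^-(\calA)$. Here lies the main obstacle: unlike part (2), this map is built from the cokernel $\coker d_\Y^{m-1}$ in degree $m$, and $\HomA(F,-)$ is not right exact, so it does not commute past the functor; indeed, truncating at the naive level $m=t$ (or even $m=t-1$) can fail to be a $\lambda$-pure quasi-isomorphism, because $K_m:=\ker(\Y\to\tau_{\ge m}\Y)$ retains the degree-$t$ homology.

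To get around this I truncate strictly below $\infl\Y$, fixing any $m\le t-2$, and use the termwise short exact sequence $0\to K_m\to\Y\to\tau_{\ge m}\Y\to 0$, where $K_m$ equals $\Y$ in degrees $<m$ and $\Im d_\Y^{m-1}$ in degree $m$. The argument rests on a single purity principle: at a spot where $\Y$ is $\lambda$-pure acyclic, left exactness of $\HomA(F,-)$ identifies the condition $\Hl^{i}(\Y)=0$ with the $\lambda$-purity of the cycle sequence $0\to\ker d_\Y^{i-1}\to Y^{i-1}\to\ker d_\Y^{i}\to 0$ (using that $\lambda$-pure acyclicity forces ordinary exactness, so $\ker d_\Y^i=\Im d_\Y^{i-1}$). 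Applying this twice: (i) $K_m$ is $\lambda$-pure acyclic, since its only new spot, degree $m$, reduces to purity of the cycle sequence there, i.e. to $\Hl^{m}(\Y)=0$; and (ii) the displayed sequence is $\lambda$-pure in every degree — the sole nonsplit spot is degree $m$, where it reads $0\to\ker d_\Y^{m}\to Y^{m}\to\ker d_\Y^{m+1}\to 0$ and is pure because $\Hl^{m+1}(\Y)=0$, which holds as $m+1\le t-1$. Granting (i) and (ii), $\HomA(F,-)$ turns the sequence into a short exact sequence of complexes, and acyclicity of $\HomA(F,K_m)$ then forces $\HomA(F,\Y)\to\HomA(F,\tau_{\ge m}\Y)$ to be a quasi-isomorphism for every $\lambda$-presentable $F$; hence $\Y\to\Z$ is a $\lambda$-pure quasi-isomorphism with $\Z\in\K^b(\calA)$. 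The genuinely delicate point, and the reason the two sides are asymmetric, is exactly this purity principle together with the necessity of truncating two steps below $\infl\Y$: it converts $\lambda$-pure acyclicity at a degree into $\lambda$-purity of the adjacent cycle sequence, rescuing the cokernel truncation that left exactness of $\HomA(F,-)$ would otherwise spoil.
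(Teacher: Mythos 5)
Your proof is correct, but for part (1) it takes a genuinely different route from the paper. Part (2) is essentially the paper's argument: the paper truncates with $\Im d_\Y^{-1}$ placed in degree $0$ where you use $\ker d_\Y^{s}$ in degree $s$, and both verifications rest on left exactness of $\HomA(F,-)$ applied to a kernel-type truncation. For part (1), however, the paper never forms the cokernel truncation whose failure to commute with $\HomA(F,-)$ you work around: it replaces the entire tail of $\Y$ by the single subobject $\ker d_\Y^{-m+1}$ of $Y^{-m+1}$, placed one degree lower, the map $\Y\to\Z$ being the corestriction of the differential in that degree; since $\HomA(P,-)$ preserves kernels, $\HomA(P,\Z)$ is computed on the nose and the homology check is immediate. (As printed, the paper's truncation point is off by one: with $\ker d_\Y^{-m+1}$ in degree $-m$ the complex $\HomA(P,\Z)$ has zero homology in degree $-m+1$, while $\HomA(P,\Y)$ need not — placing $\ker d_\Y^{-m}$ in degree $-m-1$ repairs this. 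It is the same boundary phenomenon that forces your choice $m\le \infl\Y-2$.) Your alternative keeps the honest quotient $\tau_{\ge m}\Y$ and compensates for the failure of right exactness by showing that the kernel subcomplex is $\lambda$-pure acyclic and that the termwise sequences are $\lambda$-pure, both correctly reduced to $\Hl^{m}(\Y)=\Hl^{m+1}(\Y)=0$; this is sound (granting, as the paper does throughout, that $\Hl^{i}(\Y)=0$ entails ordinary exactness in degree $i$), and it buys a $\Z$ that is a genuine quotient of $\Y$, at the cost of the purity bookkeeping that the paper's kernel construction avoids entirely.
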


\begin{proof}
(1)\ \ Without loss of generality, assume that $Y^i=0$ for all $i>0$, $X^j= 0$ for all $j\leq -m$.
Since $\f: \X\ra \Y$ is a $\lambda$-pure quasi-isomorphism, we have $\Hom_{\calA}(P, \f)$ is a quasi-isomorphism for any $\lambda$-pure projective object $P$.
This implies $\H^i\Hom _{\calA}(P,\f)$ is an isomorphism for any $i\in \mathbb{Z}$,
so we can suppose $\H^j\Hom _{\calA}(P,\Y)=0$ for all $j\leq -m$, that is, $\Y$ is $\lambda$-pure acyclic in degree $\leq -m$.
Consider the following commutative diagram
$$\xymatrix@C=0.5cm{
 \cdots\ar[r]& Y^{-m-1} \ar[d] \ar[r] & Y^{-m}\ar[d] \ar[r] & Y^{-m+1}\ar@{=}[d] \ar[r]& \cdots \ar[r]& Y^{0} \ar@{=}[d]\ar[r] & 0 \ar[r]& \cdots
  \\
 \cdots\ar[r]& 0 \ar[r] & \ker d_Y^{-m+1} \ar[r] & Y^{-m+1} \ar[r]& \cdots \ar[r]& Y^{0} \ar[r] & 0 \ar[r]& \cdots }
$$
Let $\Z$ denote the complex as shown in the second row of the above diagram, hence $\Y\ra \Z$ is a $\lambda$-pure quasi-isomorphism as wanted.

(2)\ \ It is similar to assertion (1), and we can suppose $\H^j\Hom _{\calA}(P,\Y)=0$ for all $j\geq 0$, where $P$ is a $\lambda$-pure projective object. Then consider the following commutative diagram
$$\xymatrix@C=0.5cm{
 \cdots\ar[r]& Y^{-1} \ar@{=}[d] \ar[r] & \textrm{Im}d_{Y}^{-1}\ar[d] \ar[r] & 0\ar[d] \ar[r]& \cdots
  \\
 \cdots\ar[r]& Y^{-1} \ar[r]& Y^{0} \ar[r]& Y^{1} \ar[r]& \cdots }
$$
The first row of the above diagram is complex $\X$ as expected, then $\X\ra \Y$ is a $\lambda$-pure quasi-isomorphism.
\end{proof}

According to Lemma \ref{lem207} and \cite[Proposition III.2.10]{Gelfand2003}, we immediately obtain the following proposition.

\begin{proposition}\label{prop207}
Let $\calA$ be a locally $\lambda$-presentable Grothendieck category. Then we have

(1)\ \ $\D_{\lambda}^{b}({\calA})$ is a full subcategory of $\D_{\lambda}^{-}({\calA})$, and $\D_{\lambda}^{-}({\calA})$ is a full subcategory of $\D_{\lambda}({\calA})$.

(2)\ \ $\D_{\lambda}^{b}({\calA})$ is a full subcategory of $\D_{\lambda}^{+}({\calA})$, and $\D_{\lambda}^{+}({\calA})$ is a full subcategory of $\D_{\lambda}({\calA})$.
\end{proposition}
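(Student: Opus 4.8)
The plan is to deduce both assertions from the abstract localization criterion \cite[Proposition III.2.10]{Gelfand2003}, feeding it the truncation statements of Lemma \ref{lem207}. Recall that each $\DlAs{\ast}$ is the Verdier quotient $\KAs{\ast}/\K_{\lambda}^{\ast}(\calA)$, and that a morphism of $\KAs{\ast}$ is a $\lambda$-pure quasi-isomorphism exactly when its $\Cone$ lies in $\K_{\lambda}^{\ast}(\calA)$. Since $\K_{\lambda}(\calA)$ is thick, the class $S$ of $\lambda$-pure quasi-isomorphisms is a multiplicative system compatible with the triangulation, admitting a two-sided calculus of fractions; on each full triangulated subcategory $\KAs{\ast} \subseteq \KA$ the induced class is $S \cap \mathrm{Mor}(\KAs{\ast})$. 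Thus every $\Hom$-set in a $\lambda$-pure derived category is computed by roofs of such fractions, and the functors in question are induced by the inclusions of homotopy categories; injectivity on objects is the literal inclusion of complexes, so only full faithfulness is at issue.

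First I would establish assertion (1). For the inclusion $\DlAs{b} \hookrightarrow \DlAs{-}$, the criterion of \cite[Proposition III.2.10]{Gelfand2003} reduces full faithfulness to the following completion property: every $\lambda$-pure quasi-isomorphism $\X \ra \Y$ with $\X \in \KbA$ and $\Y \in \KnA$ is followed by a $\lambda$-pure quasi-isomorphism $\Y \ra \Z$ with $\Z \in \KbA$. This is precisely Lemma \ref{lem207}(1), so the embedding is fully faithful: any roof over $\KnA$ can be pushed, via the supplied $\Y \ra \Z$, into one over $\KbA$ (fullness), and two roofs identified in $\DlAs{-}$ are already identified in $\DlAs{b}$ (faithfulness). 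For $\DlAs{-} \hookrightarrow \DlA$ the dual completion property is needed, namely that every $\lambda$-pure quasi-isomorphism $\Y \ra \Z$ with $\Z \in \KnA$, $\Y \in \KA$ is preceded by a $\lambda$-pure quasi-isomorphism $\X \ra \Y$ with $\X \in \KnA$; this is exactly Lemma \ref{lem207}(2). Combining the two embeddings gives (1).

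Assertion (2) is the mirror image and requires the bounded-below analogues of Lemma \ref{lem207}: that a $\lambda$-pure quasi-isomorphism $\Y \ra \X$ with $\X \in \KbA$, $\Y \in \KpA$ is preceded by a $\lambda$-pure quasi-isomorphism $\Z \ra \Y$ with $\Z \in \KbA$, and that a $\lambda$-pure quasi-isomorphism $\Z \ra \Y$ with $\Z \in \KpA$, $\Y \in \KA$ is followed by a $\lambda$-pure quasi-isomorphism $\Y \ra \X$ with $\X \in \KpA$. I would obtain these by repeating the constructions of Lemma \ref{lem207} with the brutal truncation from above replaced by the truncation from below, using $\supl$ in place of $\infl$; the verification that the resulting maps have $\lambda$-pure acyclic cones is identical, relying on the invariance of $\Hl$ along $\lambda$-pure quasi-isomorphisms noted earlier. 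Feeding these into \cite[Proposition III.2.10]{Gelfand2003} as in the previous paragraph yields both embeddings of (2).

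The work here is essentially bookkeeping: once the multiplicativity of $S$ and the cofinality/completion conditions are in place, full faithfulness is a formal consequence of the cited proposition. The only genuine point to verify is that the hypotheses of \cite[Proposition III.2.10]{Gelfand2003} match the statements of Lemma \ref{lem207}, and I expect the main (modest) obstacle to lie in assertion (2), whose truncation lemmas are not recorded explicitly and must be produced as the evident duals of Lemma \ref{lem207}.
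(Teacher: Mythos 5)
Your proposal is correct and follows essentially the same route as the paper, which likewise deduces the proposition directly from Lemma \ref{lem207} together with \cite[Proposition III.2.10]{Gelfand2003}; you have merely spelled out the verification of the localization criterion that the paper leaves implicit, including the (accurate) observation that assertion (2) requires the unstated bounded-below duals of Lemma \ref{lem207}.
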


\begin{corollary}
Let $\calA$ be a locally $\lambda$-presentable Grothendieck category, then $\D_{\lambda}^{b}({\calA})$, $\D_{\lambda}^{-}({\calA})$, $\D_{\lambda}^{+}({\calA})$ are the triangle subcategories of $\D_{\lambda}({\calA})$, and
$$\D_{\lambda}^{+}({\calA}) \cap \D_{\lambda}^{-}({\calA}) = \D_{\lambda}^{b}({\calA}).$$
\end{corollary}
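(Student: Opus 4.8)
The plan is to deduce everything from Proposition \ref{prop207}, combined with the long exact sequence of $\lambda$-pure homology and the truncation of Lemma \ref{lem207}. Recall that for each $\lambda$-presentable object $F$ the assignment $\Y\mapsto \H^{\bullet}\HomA(F,\Y)$ is, via $\H^{n}\HomA(F,\Y)=\HomKA(F,\Y[n])$, a cohomological functor on $\KA$ which vanishes on $\K_{\lambda}(\calA)$; hence it descends to a cohomological functor on $\DlA$. Consequently every distinguished triangle $\X\to\Y\to\Z\to\X[1]$ in $\DlA$ induces, for every such $F$, a long exact sequence
$$\cdots\to \H^{i}\HomA(F,\X)\to \H^{i}\HomA(F,\Y)\to \H^{i}\HomA(F,\Z)\to \H^{i+1}\HomA(F,\X)\to\cdots .$$
Since $\Hl^{i}(\W)=0$ is equivalent to $\H^{i}\HomA(F,\W)=0$ for all $\lambda$-presentable $F$, this plays the role of the usual long exact sequence relating $\supl$ and $\infl$ along a triangle.

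First I would record the homological characterisation that bridges the definitions with $\supl,\infl$: by Proposition \ref{prop207}, an object $\X$ of $\DlA$ lies in $\DlAs{-}$ (resp. $\DlAs{+}$, resp. $\DlAs{b}$) if and only if it is $\lambda$-pure quasi-isomorphic to a complex which is bounded above (resp. bounded below, resp. bounded). For a complex that is literally zero outside the relevant range one trivially has $\supl<\infty$ (resp. $\infl>-\infty$, resp. both), and since $\supl$ and $\infl$ are invariant under $\lambda$-pure quasi-isomorphism (the remark preceding Proposition \ref{contractible}), membership forces the corresponding finiteness. The converse — that finiteness of $\supl$ (resp. $\infl$) produces a genuinely bounded-above (resp. bounded-below) representative — is exactly the truncation performed in Lemma \ref{lem207}.

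With this characterisation, part (1) is routine. Each of the three classes is closed under the shift, since $\supl(\X[1])=\supl\X-1$ and $\infl(\X[1])=\infl\X-1$. For closure under cones, take a distinguished triangle $\X\to\Y\to\Z\to\X[1]$ with $\X,\Y\in\DlAs{-}$, so that $\supl\X,\supl\Y\leq N$ for some $N$; the long exact sequence above forces $\H^{i}\HomA(F,\Z)=0$ for all $i>N$ and all $F$, i.e. $\supl\Z\leq N$, whence $\Z\in\DlAs{-}$. The arguments for $\DlAs{+}$ (using $\infl$) and for $\DlAs{b}$ (using both) are identical. Being full by Proposition \ref{prop207}, closed under shift, and closed under cones, each is a triangulated subcategory of $\DlA$.

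For the intersection, the inclusion $\DlAs{b}\subseteq \DlAs{+}\cap\DlAs{-}$ is immediate from the definitions. Conversely, if $\X\in\DlAs{+}\cap\DlAs{-}$, then by the characterisation $\infl\X>-\infty$ and $\supl\X<\infty$; applying the truncation of Lemma \ref{lem207} at both ends produces a complex in $\K^{b}(\calA)$ that is $\lambda$-pure quasi-isomorphic to $\X$, so $\X\in\DlAs{b}$. I expect the only genuine obstacle to be the converse half of the homological characterisation, namely manufacturing an honestly bounded representative from mere finiteness of $\supl$ and $\infl$; this is precisely where Lemma \ref{lem207} is needed, and care is required because $\Hl^{i}$ is not literal cohomology but is detected through $\HomA(F,-)$, so the truncations must be chosen so as to preserve $\lambda$-pure acyclicity in the relevant degrees.
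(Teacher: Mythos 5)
Your argument is correct and follows essentially the route the paper intends: the paper states this corollary without proof as an immediate consequence of Proposition \ref{prop207} (fullness of the subcategories) together with the truncation constructions in the proof of Lemma \ref{lem207}, which are exactly the two ingredients you use. Your explicit verification of closure under shifts and cones via the long exact sequence of the cohomological functors $\H^{i}\HomA(F,-)$ is a sound (if slightly more laborious) substitute for the standard fact that the essential image of a fully faithful triangle functor is a triangulated subcategory, and your handling of the intersection by truncating at both ends matches the paper's Lemma \ref{lem207}(1)--(2).
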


\begin{theorem}
Let $X, Y \in \cal A$. Then we have $${\plext}_{\cal A}^{n}(X,Y)\cong \Hom_{\D_{\lambda}^b(\cal A)}(X,Y[n]).$$
\end{theorem}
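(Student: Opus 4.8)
The plan is to assemble a chain of natural isomorphisms that carries $\plext_{\calA}^{n}(X,Y)$ across into the $\lambda$-pure derived category, passing through the homotopy category where the total-$\Hom$ cohomology formula is available. Fix a $\lambda$-pure projective resolution $\f:\P\ra X$ as in Definition \ref{def206}; by definition $\P\in\K^{-}(\lPP)$ and $\f$ is a $\lambda$-pure quasi-isomorphism, so $\Cone(\f)\in\K_{\lambda}(\calA)$ and $\f$ becomes an isomorphism in $\DlA$. First I would use the Corollary to Proposition \ref{prop207}, which identifies $\DlAs{b}$ as a full (triangulated) subcategory of $\DlA$, so that the bounded objects $X$ and $Y[n]$ satisfy $\mathrm{Hom}_{\DlAs{b}}(X,Y[n])\cong\HomDlA(X,Y[n])$. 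Inverting $\f$ then gives
$$\mathrm{Hom}_{\DlAs{b}}(X,Y[n])\cong\HomDlA(\P,Y[n]).$$

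The heart of the argument is to show that morphisms out of $\P$ in the Verdier quotient are already homotopy classes, namely
$$\HomDlA(\P,Y[n])\cong\HomKA(\P,Y[n]).$$
Here I would invoke the orthogonality $\HomKA(\K(\lPP),\K_{\lambda}(\calA))=0$ of \cite[Theorem 1.3]{Shen2022}, already exploited in Proposition \ref{prop204}; since $\P\in\K(\lPP)$, the functor $\HomKA(\P,-)$ annihilates $\K_{\lambda}(\calA)$. A calculus-of-fractions argument then applies. For surjectivity, represent a morphism by a roof $\P\xleftarrow{s}\bm{W}\xrightarrow{g}Y[n]$ with $\Cone(s)\in\K_{\lambda}(\calA)$, complete $s$ to a triangle $\bm{W}\xrightarrow{s}\P\ra\Cone(s)\ra\bm{W}[1]$, and apply $\HomKA(\P,-)$: both neighbouring groups $\HomKA(\P,\Cone(s)[-1])$ and $\HomKA(\P,\Cone(s))$ vanish, so $s_{*}$ is an isomorphism, $s$ admits a homotopy section, and the roof reduces to an honest cochain map $\P\ra Y[n]$. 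For injectivity, if a homotopy class $h$ dies in the quotient it factors through some $C\in\K_{\lambda}(\calA)$ via a map $\P\ra C$, which is zero by the same orthogonality, whence $h=0$. Thus the localization functor is bijective on $\Hom$-groups emanating from $\P$.

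Finally I would apply the formula $\H^{n}\Hom_{\calA}(\P,Y)=\HomKA(\P,Y[n])$ recorded in Section 2, which rewrites the last group as $\H^{n}\Hom_{\calA}(\P,Y)$; by the very definition of the $\lambda$-pure derived functor this equals $\plext_{\calA}^{n}(X,Y)$. Concatenating the displayed isomorphisms yields the theorem. I expect the middle step to be the main obstacle: one must verify the calculus-of-fractions reduction cleanly in both directions from the single orthogonality input, and must confirm that the unbounded-above resolution $\P\in\K^{-}(\lPP)$ is legitimately covered by the cited vanishing $\HomKA(\K(\lPP),\K_{\lambda}(\calA))=0$ rather than only a bounded version of it, so that no boundedness hypothesis is silently needed on $\P$.
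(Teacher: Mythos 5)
Your proposal is correct and follows essentially the same route as the paper: fix a $\lambda$-pure projective resolution $\P\ra X$, use the formula $\H^{n}\Hom_{\calA}(\P,Y)=\Hom_{\KA}(\P,Y[n])$, pass to $\DlA$ via the fact that the localization is bijective on Hom-groups out of $\P$, and conclude with the fullness of $\DlAs{b}$ in $\DlA$ from Proposition \ref{prop207}. The only difference is that where the paper simply cites \cite[Lemma 4.8]{Wang} for the middle step, you re-derive it from the orthogonality $\Hom_{\KA}(\K(\lPP),\K_{\lambda}(\calA))=0$ of \cite[Theorem 1.3]{Shen2022} by a standard calculus-of-fractions argument, which is a valid (and self-contained) substitute, and your worry about boundedness is moot since $\K^{-}(\lPP)\subseteq\K(\lPP)$.
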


\begin{proof}
Let $\P\ra X$ be a $\lambda$-pure projective resolution of $X$.
Then $\P\ra X$ is a $\lambda$-pure quasi-isomorphism, and $\P\cong X$ in ${\D_{\lambda}^b(\cal A)}$.
Hence we have the equivalences as follows
\begin{align*}
{\plext}_{\cal A}^{n}(X,Y)&= \H^{n}\Hom_{\cal A}(\textbf{P},Y)\\
& = \Hom_{\KA}(\P,Y[n])\\
&\cong  \Hom_{\DlA}(\P,Y[n])\\
&\cong  \Hom_{\D_{\lambda}^b(\cal A)}(X,Y[n]),
\end{align*}
where the first isomorphism holds from \cite[Lemma 4.8]{Wang}, and the second isomorphism holds from Proposition \ref{prop207}(1).
\end{proof}

\begin{proposition}\label{prop211}
$\K_{\lambda}^{-,b}(\lPP)$ is a thick subcategory of $\K^{-}(\lPP)$.
\end{proposition}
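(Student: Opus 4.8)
The plan is to verify the three defining conditions of a thick subcategory of the triangulated category $\K^{-}(\lPP)$: closure under the shift $[\pm 1]$, the two-out-of-three property along distinguished triangles (which makes it a full triangulated subcategory), and closure under direct summands. Throughout I would work with the characterization recorded just after the definition of $\K_{\lambda}^{-,b}$: a bounded-above complex $\P$ of $\lambda$-pure projectives lies in $\K_{\lambda}^{-,b}(\lPP)$ if and only if there is a single integer $n(\P)$, \emph{uniform in $F$}, with $\H^{i}\HomA(F,\P)=0$ for every $\lambda$-presentable (equivalently, $\lambda$-pure projective) object $F$ and every $i\leq n(\P)$. The whole argument rests on the fact that, for fixed $F$, the assignment $\Y\mapsto \HomA(F,\Y)$ is an additive triangle functor $\KA\to\K(\mathbf{Ab})$, so it carries distinguished triangles to distinguished triangles and commutes with finite direct sums.

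First I would treat the shift. Since $\H^{i}\HomA(F,\P[k])=\H^{i+k}\HomA(F,\P)$, the complex $\P[k]$ satisfies the vanishing condition with bound $n(\P)-k$; as $\P[k]$ is again a bounded-above complex of $\lambda$-pure projectives, $\K_{\lambda}^{-,b}(\lPP)$ is closed under $[\pm 1]$. Next, for the triangulated structure, I would take a distinguished triangle $\X\to\Y\to\Z\to\X[1]$ in $\K^{-}(\lPP)$ with two of its vertices in $\K_{\lambda}^{-,b}(\lPP)$ and apply $\HomA(F,-)$ to obtain the long exact sequence
$$\cdots\to \H^{i}\HomA(F,\X)\to \H^{i}\HomA(F,\Y)\to \H^{i}\HomA(F,\Z)\to \H^{i+1}\HomA(F,\X)\to\cdots .$$
Setting $n$ to be the minimum of the two available bounds (shifted by $1$ in the cases where the connecting map intervenes), the two known vanishing ranges squeeze the third term to zero for all $i\leq n-1$, and this bound is independent of $F$; hence the third vertex again lies in $\K_{\lambda}^{-,b}(\lPP)$. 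Here I also use that $\K^{-}(\lPP)$ is itself a triangulated subcategory of $\KA$, so the third vertex really is (isomorphic to) a bounded-above complex of $\lambda$-pure projectives, which uses closure of $\lPP$ under finite direct sums for the mapping-cone components.

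Finally, for direct summands, suppose $\P\cong\Q\oplus\Q'$ in $\K^{-}(\lPP)$ with $\P\in\K_{\lambda}^{-,b}(\lPP)$. Because $\H^{i}\HomA(F,-)$ commutes with the finite direct sum, $\H^{i}\HomA(F,\P)=\H^{i}\HomA(F,\Q)\oplus\H^{i}\HomA(F,\Q')$, and a direct sum of abelian groups vanishes iff each summand does; so both $\Q$ and $\Q'$ inherit the vanishing with the same bound $n(\P)$. As $\Q,\Q'$ are already objects of $\K^{-}(\lPP)$, both lie in $\K_{\lambda}^{-,b}(\lPP)$. The only point that needs genuine care, and what I regard as the main (if modest) obstacle, is bookkeeping the uniformity of the bound $n$ in $F$ through the long exact sequence, i.e. checking that taking minima of the finitely many given bounds still yields a single integer that works simultaneously for all $F$; once the triangle-functoriality of $\HomA(F,-)$ is in hand, every remaining step is a short diagram chase.
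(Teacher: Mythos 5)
Your proposal is correct and follows essentially the same route as the paper: the key step in both is the long exact sequence obtained by applying $\HomA(F,-)$ to the distinguished triangle (the paper phrases this as exactness of $\Hl^{t}(\X)\to\Hl^{t}(\Y)\to\Hl^{t}(\Z)$) together with taking the minimum of the two given bounds, uniformly in $F$. The only cosmetic difference is that the paper outsources closure under direct summands to a cited lemma from \cite{Wang}, whereas you prove it directly from the compatibility of $\H^{i}\HomA(F,-)$ with finite direct sums.
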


\begin{proof}
We know $\K^{-,b}_{\lambda}(\lPP)$ is a full subcategory of $\K^{-}(\lPP)$. By \cite[Lemma 3.7]{Wang}, $\K^{-,b}_{\lambda}(\lPP)$ is also closed under direct summands. Hence if $\K^{-,b}_{\lambda}(\lPP)$ is a triangulated subcategory of $\K^{-}(\lPP)$, then it is also a thick subcategory.

In fact, let $\X\ra \Y\ra \Z\ra \X[1]$ be a distinguished triangle in $\K^{-}(\lPP)$ with $\X,\Z\in \K^{-,b}_{\lambda}(\lPP)$.
Then there exist integers $n_1, n_2$, such that $\Hl^{i}(\X)=0$ and $\Hl^{j}(\Z)=0$ for all $i\leq n_1$ and $j\leq n_2$.
Set $n=\min\{n_1, n_2\}$.
Consider the sequence
$$\Hl^{t}(\X)\ra \Hl^{t}(\Y)\ra \Hl^{t}(\Z),
$$
we have $\Hl^{t}(\Y)=0$ for all $t\leq n$.
That means $\Y\in \K^{-,b}_{\lambda}(\lPP)$, and $\K^{-,b}_{\lambda}(\lPP)$ is closed under extension.

Therefore, $\K^{-,b}_{\lambda}(\lPP)$ is a triangulated subcategory of $\K^{-}(\lPP)$ as wanted.
\end{proof}

In \cite[Corollary 4.3]{Wang} we know the composition of two $\lambda$-pure quasi-isomorphisms is also a $\lambda$-pure quasi-isomorphism. Moreover, we have the ``two out of three" property.

\begin{proposition}\label{prop206}
For two composable cochain map $\xymatrix@C=0.5cm{
  \X \ar[r]^{\f} & \Y \ar[r]^{\g} & \Z }$, if two of the three morphisms $\f, \g, \g\f$ are $\lambda$-pure quasi-isomorphisms, then so is the third one.
\end{proposition}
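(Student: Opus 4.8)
The plan is to reduce the statement to the familiar ``two out of three'' property for ordinary quasi-isomorphisms by testing against $\lambda$-pure projective objects. Recall from the discussion preceding Lemma \ref{lem207} that a cochain map $\h\colon \bm{U}\to\bm{V}$ is a $\lambda$-pure quasi-isomorphism precisely when $\Hom_{\cal A}(P,\h)$ is an ordinary quasi-isomorphism of complexes of abelian groups for every $\lambda$-pure projective object $P$. Indeed, $\h$ is a $\lambda$-pure quasi-isomorphism iff $\Cone(\h)$ is $\lambda$-pure acyclic, iff $\Hom_{\cal A}(P,\Cone(\h))$ is acyclic for all such $P$; since $\Hom_{\cal A}(P,-)$ is additive it commutes with the mapping cone, so $\Hom_{\cal A}(P,\Cone(\h))\cong\Cone\big(\Hom_{\cal A}(P,\h)\big)$, and the latter is acyclic exactly when $\Hom_{\cal A}(P,\h)$ is a quasi-isomorphism.

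With this characterization in hand, first I would record the functoriality identity $\Hom_{\cal A}(P,\g\f)=\Hom_{\cal A}(P,\g)\circ\Hom_{\cal A}(P,\f)$ and pass to cohomology, obtaining $\H^n\Hom_{\cal A}(P,\g\f)=\H^n\Hom_{\cal A}(P,\g)\circ\H^n\Hom_{\cal A}(P,\f)$ for every $n$ and every $\lambda$-pure projective $P$. The statement then follows from the elementary ``two out of three'' property for isomorphisms, applied degreewise in cohomology and for each $P$. Concretely, if $\f$ and $\g$ are $\lambda$-pure quasi-isomorphisms then each factor is an isomorphism, so the composite $\H^n\Hom_{\cal A}(P,\g\f)$ is; if $\f$ and $\g\f$ are, then $\H^n\Hom_{\cal A}(P,\g)=\H^n\Hom_{\cal A}(P,\g\f)\circ\big(\H^n\Hom_{\cal A}(P,\f)\big)^{-1}$ is an isomorphism; and symmetrically if $\g$ and $\g\f$ are. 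In all three cases the tested map is an ordinary quasi-isomorphism for every $P$, hence the remaining morphism is a $\lambda$-pure quasi-isomorphism by the characterization above.

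An alternative, purely triangulated, route would work entirely in $\KA$: the octahedral axiom applied to $\X\xrightarrow{\f}\Y\xrightarrow{\g}\Z$ produces a distinguished triangle $\Cone(\f)\to\Cone(\g\f)\to\Cone(\g)\to\Cone(\f)[1]$, and since $\f$, $\g$, $\g\f$ are $\lambda$-pure quasi-isomorphisms exactly when their respective cones lie in the thick subcategory $\K_{\lambda}(\cal A)$, the conclusion is immediate from the fact that $\K_{\lambda}(\cal A)$ is closed under cones and shifts. I expect the only genuine point requiring care in either approach to be the commutation of $\Hom_{\cal A}(P,-)$ with the mapping cone (equivalently, the correct identification of the octahedral triangle with the right sign conventions); once that is settled, both arguments are formal.
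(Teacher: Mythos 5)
Your proposal is correct and follows essentially the same route as the paper: the paper also tests against $\lambda$-pure projective objects $P$, invoking \cite[Proposition 4.2]{Wang} to get isomorphisms $\Hom_{\KA}(P,-)$ on the two known maps and then applying two-out-of-three for isomorphisms, which via $\H^n\Hom_{\cal A}(P,\X)=\Hom_{\KA}(P,\X[n])$ is exactly your cohomological argument. Your octahedral alternative is also valid and arguably cleaner, but it is not the paper's argument.
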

\begin{proof}
We only prove the case of which $\g$ and $\g\f$ are $\lambda$-pure quasi-isomorphisms.

For any $\lambda$-pure projective object $P$, by \cite[Proposition 4.2]{Wang}, we know $\Hom_{\K(\cal A)}(P,\X)\cong  \Hom_{\K(\cal A)}(P,\Z)\cong  \Hom_{\K(\cal A)}(P,\Y)$, which implies that $\f: \X\ra \Y$ is a $\lambda$-pure quasi-isomorphism.
\end{proof}

\begin{proposition}
For any complex $\X\in \D_{\lambda}^{b}(\cal A)$, and $n\in \mathbb{Z}$, the following statements are equivalent.

(1)\ \ $\plpdA\X\leq n$.

(2)\ \ $\infl \X \geq -n$, and if $\f^{'}:\P^{'}\ra \X$ is a $\lambda$-pure projective resolution of $\X$, then $\coker d_{\P^{'}}^{-n-1}$ is $\lambda$-pure projective.

(3)\ \ If $\f^{'}:\P^{'}\ra \X$ is a $\lambda$-pure projective resolution of $\X$, then $\P^{'}=\P_1\bigoplus \P_2$, where $P_1^i=0$ for any $i<-n$, and $\P_2$ is contractible.
\end{proposition}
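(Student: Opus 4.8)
The plan is to establish the cycle $(1)\Rightarrow(2)\Rightarrow(3)\Rightarrow(1)$. The common preliminary I would set up first is that any two $\lambda$-pure projective resolutions of $\X$ are homotopy equivalent: given resolutions $\f:\P\ra\X$ and $\f':\P'\ra\X$, I lift $\f$ through $\f'$ degreewise (possible because each term of $\P$ is $\lambda$-pure projective and each $\Hom_{\cal A}(P^i,\f')$ is a quasi-isomorphism) to a chain map $\g:\P\ra\P'$ with $\f'\g\simeq\f$, which is a $\lambda$-pure quasi-isomorphism by Proposition~\ref{prop206}; then $\Cone(\g)$ is a $\lambda$-pure acyclic complex of $\lambda$-pure projectives, so $\Cone(\g)=0$ in $\KA$ by Proposition~\ref{prop204} and $\g$ is a homotopy equivalence. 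Together with the observation that a contractible complex of $\lambda$-pure projectives is a direct sum of disks $D^{i}(Q)$ (the complexes $Q\xrightarrow{\Id}Q$) with $Q\in\lPP$, this gives the $\lambda$-pure analogue of Schanuel's lemma: homotopy equivalent bounded-above complexes of $\lambda$-pure projectives become isomorphic after adding contractible complexes.

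For $(1)\Rightarrow(2)$, fix a resolution $\P$ with $P^{i}=0$ for $i<-n$; then $\infl\X=\infl\P\geq -n$ since $\P$ is concentrated in degrees $\geq -n$. For an arbitrary resolution $\P'$ the preliminary yields contractible complexes $\C,\C'$ with $\P'\oplus\C'\cong\P\oplus\C$. Applying $\coker d^{-n-1}$, which commutes with direct sums, I use that $\coker d_{\P}^{-n-1}=P^{-n}$ and that each disk contributes a cokernel equal to $0$ or to some $Q\in\lPP$; hence $\coker d_{\P'}^{-n-1}$ is a direct summand of a $\lambda$-pure projective object, and so is itself $\lambda$-pure projective. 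The implication $(3)\Rightarrow(1)$ is immediate: if $\P'\cong\P_{1}\oplus\P_{2}$ with $\P_{2}$ contractible and $P_{1}^{i}=0$ for $i<-n$, then the inclusion $\P_{1}\hookrightarrow\P'$ is a homotopy equivalence, so $\P_{1}\ra\X$ is a $\lambda$-pure projective resolution with the required vanishing, whence $\plpdA\X\leq n$.

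For $(2)\Rightarrow(3)$, set $C:=\coker d_{\P'}^{-n-1}$, which is $\lambda$-pure projective by hypothesis, and consider the augmented complex $\P''\colon \cdots\ra P'^{-n-1}\xrightarrow{d^{-n-1}} P'^{-n}\xrightarrow{\pi} C\ra 0$, whose terms are all $\lambda$-pure projective. Since $\infl\X\geq -n$ makes $\P'$ $\lambda$-pure acyclic in degrees $\leq -n-1$, the complex $\P''$ is $\lambda$-pure acyclic, so Proposition~\ref{contractible} shows it is contractible. Its contracting homotopy provides a section $t:C\ra P'^{-n}$ of $\pi$, and setting the section to be $\Id$ in degrees $>-n$, $t$ in degree $-n$, and $0$ below produces a chain-level splitting of the quotient map $p\colon\P'\ra\P_{1}$, where $\P_{1}\colon 0\ra C\ra P'^{-n+1}\ra P'^{-n+2}\ra\cdots$ is concentrated in degrees $\geq -n$. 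Thus $\P'\cong\P_{1}\oplus\P_{2}$ with $\P_{2}=\ker p$; the contractibility of $\P''$ forces $\P_{2}$ to be $\lambda$-pure acyclic, and as a summand of $\P'$ it consists of $\lambda$-pure projectives, so Proposition~\ref{prop204} makes $\P_{2}$ contractible, as required.

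The main obstacle is the $\lambda$-pure acyclicity of $\P''$ used in $(2)\Rightarrow(3)$. Concretely, this requires that for a complex of $\lambda$-pure projectives which is $\lambda$-pure acyclic in low degrees, the coimage maps $P'^{i-1}\ra\Im d^{i-1}$ are $\lambda$-pure epimorphisms, equivalently that the cocycle short exact sequences are $\lambda$-pure. Because $\lambda$-pure acyclicity is strictly weaker than acyclicity, this cannot be obtained by an ordinary diagram chase; I expect to deduce it from the characterization of $\lambda$-pure acyclic complexes through $\Hom_{\cal A}(F,-)$ for $\lambda$-presentable $F$ and the colimit description of $\lambda$-purity developed in \cite{Wang}. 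The remaining ingredients --- the comparison and Schanuel preliminary and the direct-sum bookkeeping --- are routine given Propositions~\ref{contractible}, \ref{prop204}, and \ref{prop206}.
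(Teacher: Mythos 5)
Your overall route is the paper's: the same cycle $(1)\Rightarrow(2)\Rightarrow(3)\Rightarrow(1)$, the same splitting of $\P'$ at $C:=\coker d_{\P'}^{-n-1}$, and the same appeals to Propositions~\ref{contractible}, \ref{prop204} and \ref{prop206}. The only real difference is in $(1)\Rightarrow(2)$, where you go through a Schanuel-type ``resolutions agree up to contractible summands'' statement, while the paper extracts a contraction of $\cdots\ra P'^{-n-1}\ra P'^{-n}\ra C\ra 0$ directly from the homotopy equivalence $\g:\P\ra\P'$ (using that $\g$ vanishes in degrees $<-n$); both work, and $(3)\Rightarrow(1)$ is identical.

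The step you flag as the ``main obstacle'' is, however, a genuine gap, and it cannot be closed by the means you propose, because the needed claim is false. You (and the paper, which asserts this with no justification) need: if $\P'$ is a complex of $\lambda$-pure projectives with $\infl\P'\geq -n$, then $\P'':\cdots\ra P'^{-n-1}\ra P'^{-n}\xrightarrow{\pi} C\ra 0$ is $\lambda$-pure acyclic. Take ${\cal A}=\textrm{Mod-}\mathbb{Z}$, $\lambda=\omega$, $n=0$, and $\P'=(0\ra\mathbb{Z}\xrightarrow{2}\mathbb{Z}\ra 0)$ in degrees $-1,0$, regarded as a resolution of $\X=\P'$ via the identity. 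Since $\Hom(F,\mathbb{Z})$ is torsion-free for every finitely presented $F$, one has $\H^{i}\Hom(F,\P')=0$ for all $i\leq -1$, so $\infl\X=0\geq -n$; and $C=\mathbb{Z}/2$ is pure projective, so the hypotheses of $(2)$ hold here. Yet $0\ra\mathbb{Z}\xrightarrow{2}\mathbb{Z}\ra\mathbb{Z}/2\ra 0$ is not pure exact (apply $\Hom(\mathbb{Z}/2,-)$), so $\P''$ is not $\lambda$-pure acyclic and $\pi$ has no section; correspondingly $(3)$ fails for this $\P'$, and $(1)$ fails as well, since a homotopy equivalence between $\P'$ and a complex of pure projectives concentrated in degrees $\geq 0$ would force $\Id_{\mathbb{Z}}=s\circ 2$ for some $s$. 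So the argument for $(2)\Rightarrow(3)$ breaks exactly where you suspected, and in fact the implication $(2)\Rightarrow(1)$ itself is in doubt: condition $(2)$ as stated is too weak. The point is that $\lambda$-pure acyclicity of $\P'$ in degrees $\leq -n-1$ controls only the cycle sequences strictly below degree $-n$ and says nothing about the last sequence $0\ra\Im d_{\P'}^{-n-1}\ra P'^{-n}\ra C\ra 0$; what your splitting argument actually requires is that this sequence be $\lambda$-pure (equivalently, that $\pi$ be a $\lambda$-pure epimorphism), and that neither follows from $\infl\X\geq -n$ nor from $C\in\lPP$. To repair the proposition one must either add this purity to condition $(2)$ or obtain the decomposition from a comparison with a resolution already known to be concentrated in degrees $\geq -n$ --- which is only available once $(1)$ is assumed.
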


\begin{proof}
(1) $\Ra$ (2)\ \ Let $\plpdA\X\leq n$, i.e., there exists a $\lambda$-pure projective resolution $\f: \P\ra \X$ with $P^{i}=0$ for all $i<-n$.
Obviously, we have $\infl \X =\infl \P \geq -n$.
Let $\f^{'}:\P^{'}\ra \X$ be another $\lambda$-pure projective resolution of $\X$.
By the definition, we have a quasi-isomorphism
$$\Hom_{\KA}(\P,\f^{'}): \Hom_{\KA}(\P,\P^{'})\ra \Hom_{\KA}(\P,\X)
$$
which means that there exists a cochain map $\g:\P\ra \P^{'}$ such that $\f=\f^{'}\g$. By Proposition \ref{prop206}, we know $\g$ is also a $\lambda$-pure quasi-isomorphism.
Then $\g$ is a homotopy equivalence by \cite[Corollary 4.5]{Wang}, and we have the $\lambda$-pure acyclic complex
$$
\xymatrix@C=0.5cm{
  \cdots \ar[r] & P^{' -n-1} \ar[r]^{} &  P^{' -n} \ar[r]^{} & \coker d_{\P^{'}}^{-n-1} \ar[r] & 0 }
$$
is contractible by direct checking. This implies that $\coker d_{\P^{'}}^{-n-1}$ is a $\lambda$-pure projective object in $\cal A$.

(2) $\Ra$ (3)\ \ Let $\P^{'}\ra \X$ be a $\lambda$-pure projective resolution of $\X$.
The $$
\xymatrix@C=0.5cm{
  \cdots \ar[r] & P^{' -n-1} \ar[r]^{} &  P^{' -n} \ar[r]^{} & \coker d_{\P^{'}}^{-n-1} \ar[r] & 0 }
$$
is $\lambda$-pure acyclic since $\infl \P^{'} =\infl \X \geq -n$.
Moreover, the complex is split by the assumption of which $\coker d_{\P^{'}}^{-n-1}$ is $\lambda$-pure projective.
So there exists an object $Q\in \cal A$, such that $P^{' -n}=\coker d_{\P^{'}}^{-n-1}\bigoplus Q$.
Clearly, $Q$ is also a $\lambda$-pure projective object.
We set
 $$
\xymatrix@C=0.5cm{
 \P_1:=  &\cdots \ar[r] & 0 \ar[r]^{} & \coker d_{\P^{'}}^{-n-1} \ar[r]^{} &  P^{' -n+1} \ar[r]^{} &  P^{' -n+2}\ar[r] & \cdots }
$$
and
 $$
\xymatrix@C=0.5cm{
 \P_2:=  & \cdots \ar[r] & P^{' -n-2} \ar[r]^{} &  P^{' -n-1} \ar[r]^{} & Q \ar[r] & 0\ar[r] & \cdots . }
$$
Then $\P^{'}=\P_1\bigoplus \P_2$, and it satisfies the conditions of assertion (3).

(3) $\Ra$ (1)\ \ There exists an embedding $\P_1\ra \P^{'}$ by the assumption, clearly, it is a $\lambda$-pure quasi-isomorphism.
Then the composition $\P_1\ra \P^{'}\ra \X$ is also a $\lambda$-pure quasi-isomorphism, which satisfies $P_1^i=0$ for any $i<-n$.
This implies that $\plpdA\X\leq n$ as wanted.
\end{proof}

\begin{corollary}
For any object $X\in \cal A$, the following statements are equivalent.

(1)\ \ $\plpdA X\leq n$.

(2)\  \ For any $\lambda$-pure exact sequence $0\ra K\ra P_{n-1}\ra \cdots \ra P_1\ra X\ra 0$ with $P_i\in \lPP$, we have $K\in \lPP$.
\end{corollary}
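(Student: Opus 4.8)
The plan is to obtain this Corollary as the object-level specialization of the preceding Proposition, applied to the stalk complex $X$ concentrated in degree $0$. First I would record two facts that make the specialization clean. For a nonzero object $X$ one has $\infl X=\supl X=0$, since $\Hom_{\calA}(F,X)$, viewed as a complex, is concentrated in degree $0$ for every $\lambda$-presentable $F$; hence for $n\ge 0$ the requirement $\infl X\ge -n$ in the preceding Proposition holds automatically. Second, for any $\lambda$-pure projective resolution $\P'\to X$ (written cohomologically, with degree $-i$ term $P_i$), the object $\coker d_{\P'}^{-n-1}=\coker(P_{n+1}\to P_n)$ is naturally isomorphic to the $n$-th $\lambda$-pure syzygy $K=\ker(P_{n-1}\to P_{n-2})$: indeed $\lambda$-pure acyclicity gives $\Im(P_{n+1}\to P_n)=\ker(P_n\to P_{n-1})$, so $\coker(P_{n+1}\to P_n)\cong\Im(P_n\to P_{n-1})=\ker(P_{n-1}\to P_{n-2})=K$. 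Under this dictionary, condition (2) of the Corollary is condition (2) of the preceding Proposition read for stalk complexes.

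For (1) $\Ra$ (2) I would take an arbitrary $\lambda$-pure exact sequence $0\to K\to P_{n-1}\to\cdots\to P_0\to X\to 0$ with each $P_i\in\lPP$, so that $K$ is an $n$-th syzygy of $X$. Using that the category has enough $\lambda$-pure projectives, I would choose a $\lambda$-pure projective resolution of $K$ and splice it onto this partial sequence. Concatenating two $\lambda$-pure exact sequences along $K$ again yields a $\lambda$-pure exact complex (apply $\Hom_{\calA}(F,-)$ for $\lambda$-presentable $F$ to each piece and join the resulting exact sequences), so the spliced complex is a genuine $\lambda$-pure projective resolution $\P'\to X$ whose chosen $n$-th syzygy is the given $K$. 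Since $\plpdA X\le n$, the preceding Proposition gives that $\coker d_{\P'}^{-n-1}$ is $\lambda$-pure projective, and the identification above shows this cokernel is $K$; therefore $K\in\lPP$.

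For (2) $\Ra$ (1) I would, again using enough $\lambda$-pure projectives, build some $\lambda$-pure projective resolution $\cdots\to P_1\to P_0\to X\to 0$ and truncate it at stage $n$, obtaining a $\lambda$-pure exact sequence $0\to K\to P_{n-1}\to\cdots\to P_0\to X\to 0$ with $K$ its $n$-th syzygy. Hypothesis (2) forces $K\in\lPP$, so this finite sequence is itself a $\lambda$-pure projective resolution of $X$ of length $n$; equivalently, the associated complex $\P'$ vanishes in all degrees $<-n$, which is exactly $\plpdA X\le n$ by Definition \ref{def206} and the Remark following it.

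The only step carrying real content --- and hence the one to treat with care --- is the independence of the conclusion from the chosen resolution in (1) $\Ra$ (2). This is a Schanuel-type phenomenon, but I would not reprove it from scratch: it is already packaged into the preceding Proposition, whose proof compares two $\lambda$-pure projective resolutions through the homotopy equivalence of \cite[Corollary 4.5]{Wang} and the contractibility criterion of Proposition \ref{contractible}. The auxiliary points I use --- that enough $\lambda$-pure projectives exist to form and to splice resolutions, and that a concatenation of $\lambda$-pure exact sequences is again $\lambda$-pure exact --- are routine in the locally $\lambda$-presentable setting and are used tacitly throughout the paper, so I would merely flag them rather than belabor them.
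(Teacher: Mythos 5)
Your proposal is correct and follows exactly the route the paper intends: the corollary is stated without proof as the stalk-complex specialization of the preceding Proposition, and your identification of $\coker d_{\P'}^{-n-1}$ with the $n$-th $\lambda$-pure syzygy $K$, together with the splicing/truncation arguments, is precisely the omitted verification. The only discrepancy is notational: the paper's displayed sequence $0\ra K\ra P_{n-1}\ra\cdots\ra P_1\ra X\ra 0$ appears to omit $P_0$ (a typo), and you have silently used the standard form with $n$ projective terms, which is what makes the indexing match $\plpdA X\leq n$.
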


\begin{theorem}\label{theorem global dimension}
For any $n\in \mathbb{Z}$, the following statements are equivalent.

(1)\ \ $\plgldA\leq n$;

(2)\ \ $\plpdA\X\leq n-\infl\X$ for any complex $\X\in \D_{\lambda}^{b}(\cal A)$;

(3)\ \ $\plext_{\cal A}^{i}(\textbf{X},\textbf{Y})=0$ for any $\X, \Y\in \D_{\lambda}^{b}(\cal A)$, and $i> n+\supl\Y-\infl \X$.
\end{theorem}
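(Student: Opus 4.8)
The plan is to prove the cycle of implications $(1)\Rightarrow(2)\Rightarrow(3)\Rightarrow(1)$, which yields all three equivalences at once. Note first the guiding principle for the ``return'' directions: a nonzero object $X\in\cal A$, regarded as a stalk complex in degree $0$, satisfies $\infl X=\supl X=0$, so testing either $(2)$ or $(3)$ on stalk objects recovers exactly a bound on $\plpdA X$; this is what makes $(3)\Rightarrow(1)$ (and, redundantly, $(2)\Rightarrow(1)$) work.

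For $(1)\Rightarrow(2)$ I would induct on the width $w(\X):=\supl\X-\infl\X$ of a complex $\X\in\DlAs{b}$. In the base case $w(\X)=0$ the $\lambda$-pure homology is concentrated in a single degree $s=\infl\X$, so $\X$ is $\lambda$-pure quasi-isomorphic to a shift $M[-s]$ of an object $M\in\cal A$; since $\plgldA\le n$ forces $\plpdA M\le n$, shifting a resolution of $M$ concentrated in degrees $[-\plpdA M,0]$ gives $\plpdA\X=\plpdA M-s\le n-s=n-\infl\X$. For the inductive step I would use the $\lambda$-pure truncation triangle $\tau_{\le s}\X\to\X\to\tau_{>s}\X\to(\tau_{\le s}\X)[1]$, whose outer terms have width $0$ (base case, bound $n-s$) and strictly smaller width with $\infl\ge s+1$ (induction, bound $n-(s+1)$), respectively. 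It then remains to invoke the triangle estimate $\plpdA\Y\le\max\{\plpdA\X,\plpdA\Z\}$ for a distinguished triangle $\X\to\Y\to\Z\to\X[1]$, obtained by assembling the two $\lambda$-pure projective resolutions along the connecting morphism; this delivers $\plpdA\X\le n-s=n-\infl\X$.

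For $(2)\Rightarrow(3)$ the argument is a degree count on the total $\Hom$-complex. Fix $\X,\Y\in\DlAs{b}$ and choose a $\lambda$-pure projective resolution $\P\ra\X$; by $(2)$ we may arrange $P^k=0$ for $k<\infl\X-n$, and we may represent $\Y$ so that $Y^m=0$ for $m>\supl\Y$. Then in $\Hom_{\cal A}(\P,\Y)^i=\prod_k\Hom_{\cal A}(P^k,Y^{k+i})$ every factor vanishes as soon as $i>n+\supl\Y-\infl\X$: a nonzero $P^k$ forces $k\ge\infl\X-n$, while a nonzero $Y^{k+i}$ forces $k\le\supl\Y-i<\infl\X-n$, a contradiction. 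Hence the whole cochain group, and a fortiori $\plext_{\cal A}^{i}(\X,\Y)=\H^i\Hom_{\cal A}(\P,\Y)$, vanishes in that range.

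For $(3)\Rightarrow(1)$ I would specialise to stalk objects $X,Y$, where $(3)$ reads $\plext_{\cal A}^{i}(X,Y)=0$ for all $i>n$ and all $Y$. Dimension shifting along a $\lambda$-pure projective resolution gives $\plext_{\cal A}^{n+1}(X,Y)\cong\plext_{\cal A}^{1}(\Omega^{n}X,Y)$ for the $n$-th $\lambda$-pure syzygy $\Omega^{n}X$; vanishing of $\plext^{1}(\Omega^{n}X,-)$ says every $\lambda$-pure short exact sequence ending in $\Omega^{n}X$ splits, i.e. $\Omega^{n}X\in\lPP$, and the Corollary preceding the theorem then yields $\plpdA X\le n$. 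Taking the supremum over $X$ gives $\plgldA\le n$. I expect the main obstacle to be the inductive step of $(1)\Rightarrow(2)$: one must verify that the ordinary truncation functors produce genuine $\lambda$-pure quasi-isomorphisms (so that the truncation triangle lives in $\DlAs{b}$ and a single-$\lambda$-pure-homology complex really is a shifted object) and that $\plpdA\Y\le\max\{\plpdA\X,\plpdA\Z\}$ survives in the $\lambda$-pure setting. Since $\lambda$-pure acyclicity is detected by $\Hom_{\cal A}(F,-)$ for $\lambda$-presentable $F$ rather than by ordinary homology, each of these points should be checked after applying $\Hom_{\cal A}(F,-)$ and transporting the classical statements across that functor.
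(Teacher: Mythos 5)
Your $(2)\Rightarrow(3)$ is exactly the paper's argument (truncate $\Y$ above $\supl\Y$ using Lemma \ref{lem207}(2), take $\P$ concentrated in degrees $\geq \infl\X-n$, and count degrees in the total $\Hom$-complex), and your $(3)\Rightarrow(1)$ correctly fills in the dimension-shifting details that the paper compresses into the single line ``just let $\X,\Y$ be $\cal A$-objects''. The divergence, and the problem, is in $(1)\Rightarrow(2)$. The paper does not induct on width: it takes an arbitrary $\lambda$-pure projective resolution $\P\ra\X$ with $\infl\P=-m$, asserts that its tail is a $\lambda$-pure projective resolution of $\coker d_{\P}^{-m-1}$, replaces that tail by a length-$n$ resolution supplied by hypothesis (1), and splices.

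The base case of your induction is false. A complex whose $\lambda$-pure homology is concentrated in a single degree need \emph{not} be $\lambda$-pure quasi-isomorphic to a shifted object of $\cal A$: ``$\Hl^{i}(\X)=0$'' is only a condition tested against $\lambda$-presentable objects, and the surviving degree is in general not representable by an object. Concretely, let ${\cal A}$ be the category of abelian groups, $\lambda=\omega$, and $\P=(0\ra\mathbb{Z}\xrightarrow{\;2\;}\mathbb{Z}\ra 0)$ in degrees $-1,0$. For every finitely presented $Q$ the group $\Hom(Q,\mathbb{Z})$ is free, so $\Hom(Q,\P)$ is exact in degree $-1$; hence $\supl\P=\infl\P=0$, i.e.\ $\P$ has width $0$. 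If $\P\cong M[0]$ in $\D_{\omega}(\mathbb{Z})$, then for every pure-projective $Q$ one has $\Hom_{\D_\omega}(Q,\P)\cong\H^{0}\Hom(Q,\P)$ by \cite[Lemma 4.8]{Wang}; taking $Q=\mathbb{Z}$ forces $M\cong\mathbb{Z}/2$, while taking $Q=\mathbb{Z}/2$ gives $\Hom_{\D_\omega}(\mathbb{Z}/2,\P)=0\neq\mathbb{Z}/2=\Hom(\mathbb{Z}/2,\mathbb{Z}/2)$ --- a contradiction. The same phenomenon undermines your truncation triangle: the degreewise sequence $0\ra\ker d^{s}\ra X^{s}\ra\Im d^{s}\ra 0$ need not be $\lambda$-pure, so $\Hom_{\cal A}(F,-)$ does not turn $0\ra\tau_{\le s}\X\ra\X\ra\X/\tau_{\le s}\X\ra 0$ into a short exact sequence of complexes. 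To repair the implication you should drop the reduction to stalk complexes and work directly with a resolution $\P$ of $\X$, as the paper does; but note that even the paper's splice quietly uses that $\cdots\ra P^{-m-1}\ra P^{-m}\ra\coker d_{\P}^{-m-1}\ra 0$ is $\lambda$-pure exact, and the example above (with $\P$ its own resolution and $m=0$) shows this is not automatic for an arbitrary resolution --- so this step needs a genuinely more careful argument (e.g.\ restricting to resolutions built degreewise from $\lambda$-pure epimorphisms) than either your proposal or the paper provides.
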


\begin{proof}
(1) $\Rightarrow$ (2). Let $\infl\X=-m$ and $\P\ra \X$ be a $\lambda$-pure projective resolution of $\X$.
Then $\infl\P=-m$ and hence $\P$ is $\lambda$-pure acyclic in degree $< -m$.
So
$$\cdots \ra P^{-m-1}\ra P^{-m} \ra \coker d_{\P}^{-m-1}\ra 0
$$
is a $\lambda$-pure projective resolution of $\coker d_{\P}^{-m-1}$.
By the assumption, we have $\plpdA \coker d_{\P}^{-m-1}\leq n$.
Let
$$0\ra Q^{-n}\ra \cdots \ra Q^{-1}\ra Q^0 \ra \coker d_{\P}^{-m-1}\ra 0
$$
be a $\lambda$-pure projective resolution of $\coker d_{\P}^{-m-1}$.
Then %, by Lemma \ref{comparison},
we know
$$\cdots \ra 0 \ra Q^{-n}\ra \cdots \ra Q^{-1}\ra Q^0 \ra P^{-m+1}\ra P^{-m+2}\ra \cdots
$$
is a $\lambda$-pure projective resolution of $\X$. Notice $Q^{-n}$ is in degree $-m-n$, thus $\plpdA\X \leq m+n=n-\infl\X$ as wanted.

(2) $\Rightarrow$ (3). Set $\supl\Y=r$. Then we know $\Y$ is $\lambda$-pure acyclic in degree $>r$.
By the proof of Lemma \ref{lem207}(2), we have a $\lambda$-pure quasi-isomorphism $\Y^{'}\ra \Y$,
where $\Y^{'}$ is the right canonical truncation complex of $\Y$ at degree $r$.
For any $\X\in \DlAs b$, let $\P\ra \X$ be a $\lambda$-pure projective resolution of $\X$ with $P^i=0$ for all $i<-(n-\infl X)$.

Hence, we have
$\plext_{\cal A}^{i}(\textbf{X},\textbf{Y})= \H^{i}\Hom_{\cal A}(\textbf{P},\textbf{Y})\cong
\H^{i}\Hom_{\cal A}(\textbf{P},\textbf{Y}^{'})=0$
for $i>n-\infl X+r$ as expected.

(3) $\Rightarrow$ (1). Just let $\X, \Y$ be $\cal A$-objects.
\end{proof}

\section{The relations between derived categories and $\lambda$-pure derived categories}

Let $\K_{ac}{(\cal A)}$ be the subcategory of $\K(\cal A)$ consisting of all acyclic cochain complexes. Define the corresponding derived category $\D{(\cal A)}:=\K{(\cal A)}/\K_{ac}{(\cal A)}$.
Similarly, $\D^{\ast}{(\cal A)}:=\K^{\ast}{(\cal A)}/\K^{\ast}_{ac}{(\cal A)}$, where $\ast \in \{b, +,-\}$.
In this section, we explore the relations between bounded (above) derived categories and bounded (above) $\lambda$-pure derived categories.

In order to make the structure of bounded (above) derived categories simple, we always suppose that the Grothendieck category $\cal A$ has enough projective objects, and denote by $\cal P$ the full subcategory consisting of all projective objects.

\begin{example}
The following categories are Grothendieck categories, and all of them have enough projective objects.

(i)\ \ The right module category Mod-$R$ of an arbitrary unitary ring $R$;

(ii)\ \ The category of all representations of a quiver $\cal Q$, denoted by Rep$({\cal Q}, {\Lambda})$, in Mod-$\Lambda$ (see \cite[\S 7.1]{J2016}), where Mod-$\Lambda$ is the right module category of an arbitrary unitary ring $\Lambda$;

(iii)\ \ The right $C$-comodule category ${\cal M}^C$ of a semiperfect coalgebra $C$ (see \cite[Theorem 10]{Lin1977}).
\end{example}

In this case, the relative results about the derived category of abelian category, see for example \cite[Section 5]{ZP2015}, also hold in the Grothendieck condition.
It is well-known that there exist the triangle equivalences $\D^{b}{(\cal A)}\simeq \K^{-,b}{(\cal P)}$ and $\D^{-}{(\cal A)}\simeq \K^{-}{(\cal P)}$.

We will use the following Lemma.
\begin{lemma}\label{lemma402}\cite[corollary 4-3]{Verdier1977}
Suppose that ${\cal D}_1$ and ${\cal D}_2$ are triangulated subcategories of a triangulated category $\cal C$, and ${\cal D}_1$ is a subcategory of ${\cal D}_2$.
Then ${\cal D}_2/{\cal D}_1$ is a triangulated subcategory of ${\cal C}/{\cal D}_1$, and there is a triangle equivalence $({\cal C}/{\cal D}_1)/({\cal D}_2/{\cal D}_1)\simeq {\cal C}/{\cal D}_2$.
\end{lemma}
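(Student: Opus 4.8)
The plan is to deduce both assertions from the universal property of the Verdier quotient, so that the equivalence becomes a formal consequence once the relevant functors are shown to exist. Recall that for a triangulated subcategory ${\cal D}$ of a triangulated category ${\cal C}$, the canonical functor $Q_{\cal D}:{\cal C}\ra {\cal C}/{\cal D}$ is triangulated, annihilates ${\cal D}$, and is universal with this property: any triangulated functor $F:{\cal C}\ra {\cal T}$ with $F({\cal D})=0$ factors uniquely (up to natural isomorphism) through $Q_{\cal D}$. Concretely ${\cal C}/{\cal D}$ is the localization of ${\cal C}$ at the multiplicative system $S_{\cal D}=\{s\mid \Cone(s)\in {\cal D}\}$.

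First I would establish that ${\cal D}_2/{\cal D}_1$ is a triangulated subcategory of ${\cal C}/{\cal D}_1$. Since $Q_1:{\cal C}\ra {\cal C}/{\cal D}_1$ is triangulated, it commutes with the shift and sends distinguished triangles to distinguished triangles; because ${\cal D}_2$ is itself triangulated, the full subcategory of ${\cal C}/{\cal D}_1$ on the objects of ${\cal D}_2$ is then closed (up to isomorphism) under shifts and under taking cones of its morphisms, which is exactly what is needed.

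For the equivalence I would build two mutually inverse functors using the universal property. Write $Q_1:{\cal C}\ra {\cal C}/{\cal D}_1$, $Q_2:{\cal C}\ra {\cal C}/{\cal D}_2$ and $Q':{\cal C}/{\cal D}_1\ra ({\cal C}/{\cal D}_1)/({\cal D}_2/{\cal D}_1)$ for the three quotient functors. Since ${\cal D}_1\subseteq {\cal D}_2$, the functor $Q_2$ annihilates ${\cal D}_1$ and hence factors as $Q_2=H\circ Q_1$; as $H$ then annihilates ${\cal D}_2/{\cal D}_1$, it factors further as $H=\bar H\circ Q'$, giving $\bar H:({\cal C}/{\cal D}_1)/({\cal D}_2/{\cal D}_1)\ra {\cal C}/{\cal D}_2$. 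Conversely $Q'\circ Q_1$ annihilates ${\cal D}_2$, so it factors as $G\circ Q_2$ for some triangulated $G:{\cal C}/{\cal D}_2\ra ({\cal C}/{\cal D}_1)/({\cal D}_2/{\cal D}_1)$. The composites $\bar H\circ G$ and $G\circ \bar H$ become, after precomposition with $Q_2$ respectively $Q'\circ Q_1$, the identity, so the uniqueness in the universal property forces $\bar H\circ G\cong \Id$ and $G\circ \bar H\cong \Id$; thus $\bar H$ is a triangle equivalence.

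The main obstacle lies beneath the invocations of the universal property, in the passage between the three localizations at the level of morphisms. A morphism of ${\cal C}/{\cal D}_1$ is an equivalence class of roofs $X\xleftarrow{s}X'\xrightarrow{f}Y$ with $\Cone(s)\in {\cal D}_1$, and to see that $Q'$ (localizing away the $({\cal D}_2/{\cal D}_1)$-quasi-isomorphisms) produces the same category as localizing ${\cal C}$ directly at $S_{{\cal D}_2}$, one must verify that such a roof becomes invertible in the iterated quotient precisely when, read back in ${\cal C}$, its cone lies in ${\cal D}_2$. This is the compatibility of the two multiplicative systems, and it is here that the octahedral axiom is needed to control the cone of a composite of fractions in terms of the cones of its factors. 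Once this compatibility is secured the induced maps on Hom-sets are bijections, and since every quotient functor is the identity on objects the functor $\bar H$ is automatically essentially surjective; the triangulated structure is preserved because each of the functors involved is triangulated.
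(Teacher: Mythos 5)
The paper offers no proof of this lemma---it is quoted verbatim with a citation to Verdier---so there is nothing internal to compare against; judged on its own, your argument is correct and is essentially the classical one from the cited source: deduce the factorizations $Q_2=\bar H\circ Q'\circ Q_1$ and $Q'\circ Q_1=G\circ Q_2$ from the universal property of the Verdier quotient and let uniqueness force $\bar H$ and $G$ to be mutually inverse. The only point you pass over quickly is why ${\cal D}_2/{\cal D}_1\ra{\cal C}/{\cal D}_1$ is fully faithful, but the key observation is the one you implicitly use: since ${\cal D}_1\subseteq{\cal D}_2$ and ${\cal D}_2$ is closed under the third vertex of a triangle, the apex of any roof between objects of ${\cal D}_2$ with denominator-cone in ${\cal D}_1$ already lies in ${\cal D}_2$, so the fraction calculus for ${\cal D}_2/{\cal D}_1$ agrees with the one computed inside ${\cal C}/{\cal D}_1$.
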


By \cite[Theorem 4.9]{Wang}, we have a triangle equivalence
$\K^-(\lPP)\simeq  \D_{\lambda}^-(\cal A).$
Next we will restrict it to $\K_{ac}^{-}(\lPP)$ and also obtain a triangle equivalence.

\begin{theorem}
Let $\cal A$ be a Grothendieck category with enough projective objects.
Then there exists the following equivalence.
$$\K_{ac}^{-}(\lPP) \simeq \K_{ac}^{-}({\cal A})/\K_{\lambda}^{-}({\cal A}).$$
Moreover, we have:
$$\D^-({\cal A})\simeq \K^-(\lPP)/\K_{ac}^{-}({\lPP})\simeq \D_{\lambda}^-({\cal A})/\K_{ac}^{-}({\lPP}).
$$
\end{theorem}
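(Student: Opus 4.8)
The plan is to derive the first equivalence by restricting the triangle equivalence $F\colon \K^{-}(\lPP)\xrightarrow{\sim}\D_{\lambda}^{-}(\cal A)$ of \cite[Theorem 4.9]{Wang} to the acyclic complexes, and then to deduce the displayed chain from Verdier's quotient lemma (Lemma \ref{lemma402}). The observation underpinning everything is that a $\lambda$-pure acyclic complex is in particular acyclic, since each of its defining short exact sequences is $\lambda$-pure and hence exact; equivalently, every $\lambda$-pure quasi-isomorphism is an ordinary quasi-isomorphism, because its mapping cone is $\lambda$-pure acyclic and therefore acyclic. In particular $\K_{\lambda}^{-}(\cal A)\subseteq \K_{ac}^{-}(\cal A)$ as thick subcategories of $\K^{-}(\cal A)$, so that $\K_{ac}^{-}(\cal A)/\K_{\lambda}^{-}(\cal A)$ is a full triangulated subcategory of $\D_{\lambda}^{-}(\cal A)=\K^{-}(\cal A)/\K_{\lambda}^{-}(\cal A)$, all of whose objects are represented by acyclic complexes.

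For the first equivalence I would check that $F$, which is the composite of the inclusion $\K^{-}(\lPP)\hookrightarrow \K^{-}(\cal A)$ with the Verdier quotient functor, restricts to an equivalence onto this subcategory. If $\P\in\K_{ac}^{-}(\lPP)$ then $\P$ is acyclic, so $F(\P)$ lies in $\K_{ac}^{-}(\cal A)/\K_{\lambda}^{-}(\cal A)$; this shows the restriction is well defined. For essential surjectivity, an arbitrary object of the subcategory is represented by an acyclic complex $\X\in\K_{ac}^{-}(\cal A)$; choosing a $\lambda$-pure projective resolution $\P\to\X$ (which exists because enough projectives force enough $\lambda$-pure projectives, every projective being $\lambda$-pure projective), the structure map is a $\lambda$-pure quasi-isomorphism and hence a quasi-isomorphism, so $\P$ is acyclic and $\P\in\K_{ac}^{-}(\lPP)$ with $F(\P)\cong\X$. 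Since $F$ is fully faithful on all of $\K^{-}(\lPP)$ and the target is a full subcategory, the restriction is fully faithful; together with essential surjectivity this yields $\K_{ac}^{-}(\lPP)\simeq\K_{ac}^{-}(\cal A)/\K_{\lambda}^{-}(\cal A)$.

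For the second statement I would apply Lemma \ref{lemma402} with $\cal C=\K^{-}(\cal A)$, ${\cal D}_1=\K_{\lambda}^{-}(\cal A)$ and ${\cal D}_2=\K_{ac}^{-}(\cal A)$, which gives
$$\D_{\lambda}^{-}(\cal A)\big/\big(\K_{ac}^{-}(\cal A)/\K_{\lambda}^{-}(\cal A)\big)\simeq \K^{-}(\cal A)/\K_{ac}^{-}(\cal A)=\D^{-}(\cal A).$$
Transporting the subcategory $\K_{ac}^{-}(\cal A)/\K_{\lambda}^{-}(\cal A)$ across $F$ identifies it with $\K_{ac}^{-}(\lPP)$, so this reads $\D_{\lambda}^{-}(\cal A)/\K_{ac}^{-}(\lPP)\simeq \D^{-}(\cal A)$; applying the equivalence $\K^{-}(\lPP)\simeq \D_{\lambda}^{-}(\cal A)$ of \cite[Theorem 4.9]{Wang} once more then also yields $\K^{-}(\lPP)/\K_{ac}^{-}(\lPP)\simeq \D^{-}(\cal A)$, which is the desired chain.

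I expect the only genuinely delicate point to be the containment $\K_{\lambda}^{-}(\cal A)\subseteq \K_{ac}^{-}(\cal A)$ together with the resulting acyclicity of the $\lambda$-pure projective resolution $\P$ in the essential-surjectivity step; once this is in hand the remainder is a formal transport of subcategories through $F$ and a single application of Verdier's quotient lemma.
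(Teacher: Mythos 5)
Your proposal is correct and follows essentially the same route as the paper: both restrict the equivalence $\K^{-}(\lPP)\simeq\D_{\lambda}^{-}(\cal A)$ to acyclic complexes (using that a $\lambda$-pure quasi-isomorphism is a quasi-isomorphism, so a $\lambda$-pure projective resolution of an acyclic complex is itself acyclic) and then conclude with Lemma \ref{lemma402} applied to $\K_{\lambda}^{-}(\cal A)\subseteq\K_{ac}^{-}(\cal A)\subseteq\K^{-}(\cal A)$. The only cosmetic difference is that the paper invokes \cite[Lemma 4.8]{Wang} directly for full faithfulness of the restricted functor, where you deduce it from full faithfulness of $F$ together with fullness of the target subcategory; these are interchangeable.
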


\begin{proof}
Let $F^{'}:\K_{ac}^{-}(\lPP) \ra \K_{ac}^{-}({\cal A})/\K_{\lambda}^{-}({\cal A})$
be the composition of the embedding functor
$\K_{ac}^{-}(\lPP) \hookrightarrow \K_{ac}^{-}({\cal A})$
and the localization functor
$Q:\K_{ac}^{-}({\cal A}) \ra \K_{ac}^{-}({\cal A})/\K_{\lambda}^{-}({\cal A})$.
Consider the following diagram
$$\xymatrix@C=0.5cm{
  \K_{ac}^{-}(\lPP)  \ar[r]^{inc} \ar[d]_{F^{'}}
  & \K^{-}(\lPP) \ar[d]_{\cong}^{F}
  \\
 \K_{ac}^{-}({\cal A})/\K_{\lambda}^{-}({\cal A})
 \ar[r]^{inc} &
 \K^-{(\cal A)}/\K_{\lambda}^{-}({\cal A})
  }
$$
By \cite[lemma 4.8]{Wang}, we have $F^{'}$ is fully faithful.
For any complex $\X\in \K_{ac}^{-}(\cal A)$, there exists a $\lambda$-pure quasi-isomorphism $\P\ra \X$ with $\P\in \K^{-}(\lPP)$.
Since a $\lambda$-pure quasi-isomorphism is also a quasi-isomorphism and $\X$ is acyclic, we know that $\P$ is also acyclic, that is, $\P\in \K_{ac}^{-}(\lPP)$.
Therefore, $F^{'}$ is dense as expected.

In this case,
\begin{align*}\K^{-}(\lPP)/\K_{ac}^{-}(\lPP)&\simeq
(\K^-{(\cal A)}/\K_{\lambda}^{-}({\cal A}))/
(\K_{ac}^{-}({\cal A})/\K_{\lambda}^{-}({\cal A}))\\
&\simeq \K^-{(\cal A)}/ \K_{ac}^{-}({\cal A})\\
&=
\D^-(\cal A).
\end{align*}
Note that the second equivalence holds by Lemma \ref{lemma402}.
\end{proof}

Let $\K_{ac}^{b}(\lPP)$ be the subcategory of $\K^b(\cal A)$ consisting of all acyclic cochain complexes of $\lambda$-pure projective objects.
The following theorem is our main result in this section.

\begin{theorem}\label{th401}
If $\plgldA$ is finite, then there are triangle equivalences
$$\D^{b}({\cal A})
\simeq \K_{\lambda}^{-,b}(\lPP)/ \K_{ac}^{b}(\lPP)
\simeq \D_{\lambda}^{b}{(\cal A)}/\K_{ac}^{b}(\lPP) .
$$
\end{theorem}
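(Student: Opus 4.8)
The plan is to deduce both equivalences from the bounded‑above comparison $\K^-(\lPP)\simeq\D_{\lambda}^{-}(\cal A)$ of \cite[Theorem 4.9]{Wang} together with Verdier's Lemma \ref{lemma402}, spending the hypothesis $\plgldA<\infty$ at only one point, namely where a bounded‑above $\lambda$-pure projective resolution is truncated to a bounded one.

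First I would establish the rightmost equivalence $\K_{\lambda}^{-,b}(\lPP)/\K_{ac}^{b}(\lPP)\simeq\D_{\lambda}^{b}(\cal A)/\K_{ac}^{b}(\lPP)$ by restricting $\K^-(\lPP)\simeq\D_{\lambda}^{-}(\cal A)$. That equivalence is induced by the localization $\P\mapsto\P$, so it preserves $\lambda$-pure cohomology and carries the full subcategory $\K_{\lambda}^{-,b}(\lPP)$ (bounded‑above complexes of $\lambda$-pure projectives whose $\lambda$-pure cohomology is bounded below) onto $\D_{\lambda}^{b}(\cal A)$ (bounded $\lambda$-pure cohomology); no finiteness assumption is needed for this step. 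Since a bounded acyclic complex of $\lambda$-pure projectives has bounded $\lambda$-pure cohomology, it already lies in $\K_{\lambda}^{-,b}(\lPP)$, and the restricted functor sends the subcategory $\K_{ac}^{b}(\lPP)$ identically onto $\K_{ac}^{b}(\lPP)\subseteq\D_{\lambda}^{b}(\cal A)$. Passing to Verdier quotients by $\K_{ac}^{b}(\lPP)$ on both sides then gives the equivalence; Proposition \ref{prop211} and \cite[Lemma 3.7]{Wang} ensure the relevant subcategories are thick, so the quotients are legitimate.

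Next I would prove $\D_{\lambda}^{b}(\cal A)/\K_{ac}^{b}(\lPP)\simeq\D^{b}(\cal A)$, which is where finiteness enters. Writing $\D_{\lambda}^{b}(\cal A)=\K^{b}(\cal A)/\K_{\lambda}^{b}(\cal A)$ and $\D^{b}(\cal A)=\K^{b}(\cal A)/\K_{ac}^{b}(\cal A)$, and using the inclusions of thick subcategories $\K_{\lambda}^{b}(\cal A)\subseteq\K_{ac}^{b}(\cal A)\subseteq\K^{b}(\cal A)$ (a $\lambda$-pure acyclic complex is acyclic), Lemma \ref{lemma402} yields $\D_{\lambda}^{b}(\cal A)/\bigl(\K_{ac}^{b}(\cal A)/\K_{\lambda}^{b}(\cal A)\bigr)\simeq\D^{b}(\cal A)$. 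It then remains to identify the subcategory $\K_{ac}^{b}(\cal A)/\K_{\lambda}^{b}(\cal A)$ of $\D_{\lambda}^{b}(\cal A)$ with $\K_{ac}^{b}(\lPP)$; this is the bounded analogue of the first equivalence of the bounded‑above theorem established just above. Accordingly I would define $F'\colon\K_{ac}^{b}(\lPP)\to\K_{ac}^{b}(\cal A)/\K_{\lambda}^{b}(\cal A)$ as the inclusion followed by the localization, obtain its full faithfulness from \cite[Lemma 4.8]{Wang} exactly as in the bounded‑above argument, and then verify density.

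The hard part is the density of $F'$, and it is precisely this step that consumes $\plgldA<\infty$. Given a bounded acyclic complex $\X\in\K_{ac}^{b}(\cal A)$, I would take a $\lambda$-pure projective resolution $\P\to\X$ in $\K^-(\lPP)$; since $\X\in\D_{\lambda}^{b}(\cal A)$, Theorem \ref{theorem global dimension} gives $\plpdA\X\leq\plgldA-\infl\X<\infty$, so $\P$ can be chosen with $P^{i}=0$ for $i<-(\plgldA-\infl\X)$, that is $\P\in\K^{b}(\lPP)$. Because a $\lambda$-pure quasi-isomorphism is an ordinary quasi-isomorphism, $\P$ is acyclic, hence $\P\in\K_{ac}^{b}(\lPP)$ with $F'(\P)\cong\X$, proving density. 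Without finiteness one only obtains $\P\in\K_{ac}^{-}(\lPP)$, which is exactly why the bounded‑above theorem quotients by $\K_{ac}^{-}(\lPP)$ rather than $\K_{ac}^{b}(\lPP)$. Chaining the two equivalences completes the proof.
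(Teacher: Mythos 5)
Your argument is correct, but it is organized quite differently from the paper's. The paper proves the equivalence $\D^{b}({\cal A})\simeq \K_{\lambda}^{-,b}(\lPP)/\K_{ac}^{b}(\lPP)$ head-on: it builds a functor $\overline{G}\colon \K_{\lambda}^{-,b}(\lPP)/\K_{ac}^{b}(\lPP)\to \D^{-}({\cal A})$ from the composite $\K_{\lambda}^{-,b}(\lPP)\to\K^{-}({\cal A})\to\D^{-}({\cal A})$ and checks density, fullness and faithfulness by hand, using Lemma \ref{lem405} (replacing a complex in $\K^{-,b}({\cal P})$ by one in $\K_{\lambda}^{-,b}(\lPP)$), Lemma \ref{lem406} (an explicit lifting-and-homotopy construction to manipulate roofs for fullness), and Lemma \ref{lem404}; the finiteness of $\plgldA$ is spent in Lemma \ref{lem404} to force the cone of the relevant quasi-isomorphism into $\K_{ac}^{b}(\lPP)$. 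You instead establish the \emph{second} equivalence first by restricting $\K^{-}(\lPP)\simeq\D_{\lambda}^{-}({\cal A})$ to $\K_{\lambda}^{-,b}(\lPP)\simeq\D_{\lambda}^{b}({\cal A})$, and then get $\D_{\lambda}^{b}({\cal A})/\K_{ac}^{b}(\lPP)\simeq\D^{b}({\cal A})$ by a double application of Verdier's Lemma \ref{lemma402} together with the identification $\K_{ac}^{b}(\lPP)\simeq\K_{ac}^{b}({\cal A})/\K_{\lambda}^{b}({\cal A})$ --- exactly the bounded analogue of the paper's proof of its bounded-above theorem --- with $\plgldA<\infty$ consumed only in the density check via Theorem \ref{theorem global dimension}. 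Your route is more formal and isolates the use of the hypothesis cleanly, at the cost of relying on the abstract localization lemma for full faithfulness where the paper constructs the roofs explicitly; the paper's route gives concrete control over morphisms but requires the two technical lifting lemmas. Two small points to tidy: your appeal to Proposition \ref{prop211} does not actually cover the thickness of $\K_{ac}^{b}(\lPP)$ (that proposition concerns $\K_{\lambda}^{-,b}(\lPP)$ inside $\K^{-}(\lPP)$), though the needed closure properties are routine; and in the density step you should note that the finite-length resolution of a bounded complex can indeed be taken bounded above (as in the construction of \cite[Theorem 4.9]{Wang}), so that $\plpdA\X<\infty$ really yields $\P\in\K^{b}(\lPP)$.
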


In order to complete the proof of Theorem \ref{th401}, we need the following preparations.

\begin{lemma}\label{lem405}
Let $\X\in \K^{-, b}({\cal P})$. Then there exists a quasi-isomorphism $\X\ra \P$ with $\P\in \K_{\lambda}^{-, b}(\lPP)$.
\end{lemma}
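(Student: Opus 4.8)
The plan is to exploit the fact that every projective object is projective with respect to $\lambda$-pure epimorphisms, so that ${\cal P}\subseteq\lPP$ and $\X$ is \emph{already} a bounded-above complex of $\lambda$-pure projective objects; the only feature it may lack for membership in $\K_{\lambda}^{-,b}(\lPP)$ is $\lambda$-pure acyclicity in low degrees (ordinary acyclicity there is weaker, since it does not force the cocycle sequences to be $\lambda$-pure). Since $\X\in\K^{-,b}({\cal P})$ has bounded cohomology, I would first fix an integer $n$ with $\H^{i}(\X)=0$ for all $i\le n$ and $X^{i}=0$ for $i>k$. Writing $B=\Im(d_{\X}^{n})\subseteq X^{n+1}$, the truncated tail $\cdots\to X^{n-1}\to X^{n}\twoheadrightarrow B\to 0$ is an exact bounded-above complex of projectives, that is, an ordinary projective resolution of $B$.

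Next I would choose a $\lambda$-pure projective resolution $\Q\to B$ with $\Q$ concentrated in degrees $\le n$; such resolutions exist by \cite{Wang} (the same input that gives $\K^{-}(\lPP)\simeq\D_{\lambda}^{-}({\cal A})$). Because a $\lambda$-pure quasi-isomorphism is in particular an ordinary one, $\Q\to B$ is also an ordinary resolution. I then define
$$\P:\quad \cdots\to Q^{n-1}\to Q^{n}\xrightarrow{\ \iota\epsilon\ }X^{n+1}\to\cdots\to X^{k}\to 0,$$
agreeing with $\X$ in degrees $\ge n+1$ and with $\Q$ in degrees $\le n$, where $\epsilon\colon Q^{n}\twoheadrightarrow B$ is the augmentation and $\iota\colon B\hookrightarrow X^{n+1}$ the inclusion. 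By construction $\P$ is a bounded-above complex of $\lambda$-pure projectives, and in degrees $\le n$ it coincides with the $\lambda$-pure acyclic augmented resolution of $B$ (the presence of $X^{n+1}$ rather than $B$ in degree $n+1$ is harmless, as $\iota$ is a monomorphism); hence $\Hl^{i}(\P)=0$ for all $i\le n$ and $\P\in\K_{\lambda}^{-,b}(\lPP)$.

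To produce the quasi-isomorphism $\X\to\P$, I would invoke the comparison theorem: the tail of $\X$ is a complex of projectives and $\Q\to B$ is a resolution, so there is a chain map $\psi$ from the tail of $\X$ to $\Q$ lifting $\Id_{B}$. Setting the morphism equal to $\psi$ in degrees $\le n$ and to the identity in degrees $\ge n+1$ yields a map $\X\to\P$; commutativity at the junction degree $n$ holds because $\psi$ lifts $\Id_{B}$, so $\epsilon\psi^{n}$ is the augmentation of the tail of $\X$, while $d_{\X}^{n}$ factors as that augmentation followed by $\iota$. This map is the identity on $\H^{i}$ for $i\ge n+2$ and both sides vanish for $i\le n$; at degree $n+1$ both complexes have cohomology $\ker(d_{\X}^{n+1})/B$ and the map is the identity there, so it is a quasi-isomorphism.

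The main obstacle is conceptual rather than computational: one must recognize that an \emph{ordinary} quasi-isomorphism is required, and that taking a $\lambda$-pure projective resolution of all of $\X$ would fail, since a $\lambda$-pure quasi-isomorphism preserves $\lambda$-pure cohomology, which may be nonzero in arbitrarily low degrees and so could never land in $\K_{\lambda}^{-,b}(\lPP)$. The device of replacing only the low-degree projective-resolution tail of $\Im(d_{\X}^{n})$ by a $\lambda$-pure projective resolution is exactly what forces $\lambda$-pure acyclicity below degree $n$ while keeping an ordinary quasi-isomorphism available through the comparison theorem; the only delicate verification is the matching of cohomology at the junction degree $n+1$ and the check that the glued map is a genuine chain map.
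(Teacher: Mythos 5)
Your proposal is correct and follows essentially the same route as the paper: both keep the upper (bounded) part of $\X$ untouched, replace the acyclic tail---viewed as an ordinary projective resolution of a syzygy ($\Im d_{\X}^{n}$ in your version, $\ker d_{\X}^{n}$ in the paper's)---by a $\lambda$-pure projective resolution of that same object, glue, and obtain the map via the classical comparison theorem, checking it is a quasi-isomorphism because both tails are acyclic. The only differences are cosmetic bookkeeping in where the truncation is cut.
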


\begin{proof}
Let $\X\in \K^{-, b}({\cal P})$, i.e., there exists an integer $n$, such that
$$\xymatrix@C=0.5cm{
  \X_1:& \cdots \ar[r] & X^{n-2} \ar[r] & X^{n-1}\ar[r]^{} & \ker d_X^{n}\ar[r] & 0 }$$
is acyclic.
Assume $$\xymatrix@C=0.5cm{
  \Q: &\cdots \ar[r] & Q^{n-2}  \ar[r] & Q^{n-1} \ar[r]^{} & \ker d_X^{n} \ar[r] &0}
$$
is a $\lambda$-pure projective resolution of $\ker d_{\X}^{n}$.
By the usual Comparison Theorem, there exists a cochain map $\g: \X_1\ra \Q$.
Then we construct the complex $\P$ by set $P^i=X^i$ for $i\geq n$ and $P^j=Q^j$ for $i< n$.
So $\P\in \K_{\lambda}^{-, b}(\lPP)$, and $\g$ induces a cochain map $\f$ from $\X$ to $\P$.
Since both $(\X_1)_{\leq n-1}$ and $\Q_{\leq n-1}$ are acyclic complexes, we have $\Cone(\g_{\leq n-1})$ is also acyclic by \cite[Proposition 3.8]{Wang}.
Thus, $\g_{\leq n-1}$ is a quasi-isomorphism.
This means $\f: \X\ra \P$ is a quasi-isomorphism as wanted.
\end{proof}

\begin{lemma}\label{lem406}
For any cochain map $\g: \X\ra \Q$ with $\X\in \K^{-, b}({\cal P})$ and $\Q \in \K_{\lambda}^{-, b}(\lPP)$. There exist a quasi-isomorphism $\f: \X\ra \P$ with $\P \in \K_{\lambda}^{-, b}(\lPP)$ and a cochain map $\h: \P\ra \Q$, such that $\g= \h\f$ in $\K^{-}(\cal A)$.
\end{lemma}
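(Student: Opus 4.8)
The plan is to reuse the explicit construction behind Lemma \ref{lem405} to produce $\f$, and then to build $\h$ by a downward comparison/lifting argument. First I would fix an integer $n$ small enough that the truncation $\cdots\to X^{n-1}\to\ker d_X^{n}\to 0$ of $\X$ is an (ordinary) acyclic complex of projectives and, simultaneously, $\Q$ is $\lambda$-pure acyclic in degrees $\le n$; this is possible since $\X\in\K^{-,b}({\cal P})$ and $\Q\in\K_{\lambda}^{-,b}(\lPP)$. Applying the construction of Lemma \ref{lem405} then gives a quasi-isomorphism $\f:\X\ra\P$ with $\P\in\K_{\lambda}^{-,b}(\lPP)$ of the explicit shape $P^{i}=X^{i}$ and $\f^{i}=\Id$ for $i\ge n$, while for $i<n$ the terms $P^{i}=Q_0^{i}$ come from a fixed $\lambda$-pure projective resolution $\Q_0$ of $\ker d_X^{n}$ and $\f^{i}$ is the comparison map. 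The reason for keeping $\f$ the identity in high degrees is precisely that it lets me initialize $\h$ there for free.

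I then set $\h^{i}:=\g^{i}$ for $i\ge n$, so that $\h^{i}\f^{i}=\g^{i}$ and the cochain-map relations hold automatically in degrees $\ge n$, and I extend $\h$ downward by induction. At the junction I must solve $d_Q^{n-1}\h^{n-1}=\g^{n}d_P^{n-1}$; the right-hand side is a cocycle of the complex $\Hom_{\cal A}(P^{n-1},\Q)$, because $d_Q^{n}\g^{n}d_P^{n-1}=\g^{n+1}d_X^{n}d_P^{n-1}=0$, using that $d_P^{n-1}$ factors through $\ker d_X^{n}$ and that $\g$ is a cochain map. Since $P^{n-1}=Q_0^{n-1}$ is $\lambda$-pure projective and $\Q$ is $\lambda$-pure acyclic at degree $n$, the complex $\Hom_{\cal A}(P^{n-1},\Q)$ is exact there, so this cocycle is a coboundary and $\h^{n-1}$ exists. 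Iterating, each stage ($i<n$) produces $\h^{i}$ as a lift of a cocycle in $\Hom_{\cal A}(P^{i},\Q)$, exactly as in the usual Comparison Theorem but with ``projective/acyclic'' replaced by ``$\lambda$-pure projective/$\lambda$-pure acyclic''. This yields a genuine cochain map $\h:\P\ra\Q$.

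It remains to check that $\h\f$ and $\g$ agree in $\K^{-}(\cal A)$, i.e. are homotopic. By construction $\h\f-\g$ vanishes in degrees $\ge n$, so it is a cochain map $\X\ra\Q$ supported in low degrees, and I would build a null-homotopy $s^{i}:X^{i}\ra Q^{i-1}$ by the same downward induction, taking $s^{i}=0$ for $i\ge n$ and solving the coboundary equations one degree at a time. Here the relevant complexes are $\Hom_{\cal A}(X^{i},\Q)$, which are exact in the range $<n$ because the $X^{i}$ are projective (hence $\lambda$-pure projective) and $\Q$ is $\lambda$-pure acyclic there. I expect the main obstacle to be the uniform bookkeeping of the cutoff $n$: one must choose $n$ so that both the acyclicity of the truncation of $\X$ and the $\lambda$-pure acyclicity of $\Q$ hold throughout the range where the lifting is performed, and then verify at each inductive step that the map being lifted is genuinely a cocycle. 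Once these cocycle conditions are confirmed, every lift is supplied by $\lambda$-pure acyclicity, and the two inductions (for $\h$ and for the homotopy) close up, giving $\g=\h\f$ in $\K^{-}(\cal A)$.
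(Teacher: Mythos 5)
Your proposal is correct and follows essentially the same route as the paper: choose $n=\min\{n_1,n_2\}$, build $\f$ via the construction of Lemma \ref{lem405} with $\f^i=\Id$ in degrees $\ge n$, then produce $\h$ and the homotopy by downward induction, using that $P^i$ (resp.\ $X^i$) is $\lambda$-pure projective and $\Q$ is $\lambda$-pure acyclic in degrees $\le n$. Your phrasing via exactness of $\Hom_{\cal A}(P^i,\Q)$ is just the paper's factor-through-the-kernel-and-lift argument in different words.
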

\begin{proof}
For  $\X\in \K^{-, b}({\cal P})$, $\Q \in \K_{\lambda}^{-, b}(\lPP)$, there exist integers $n_1$ and $n_2$, such that $\X$ is acyclic in degree $\leq n_1$ and $\Q$ is $\lambda$-pure acyclic in degree $\leq n_2$.
Set $n=\min\{n_1,n_2\}$.
By the construction of the above Lemma, there exists a quasi-isomorphism $\f: \X \ra \P$ with $X^i=P^i$ for all $i\geq n$ and $\P\in \K_{\lambda}^{-, b}(\lPP)$.

Now we construct $\h:\P\ra \Q$ by induction. For $i\geq n$, set $f^i=\Id_{\X}^{i}$ and $h^i=g^i$.
For $i\leq n-1$, we consider the following diagram
$$\xymatrix@C=0.38cm{
\cdots \ar[r]
&
  X^{n-2}\ar[dr]^{f^{n-2}} \ar[rr] \ar[dd]^(0.6){g^{n-2}}
&&
  X^{n-1}\ar[rr]^{}\ar[dr]^{f^{n-1}} \ar[dd]^(0.6){g^{n-1}}
&&
  X^{n}\ar[rr]\ar[dr]^{f^n:=\Id_{\X}^{n}} \ar[dd]^(0.6){g^n}
&&
  X^{n+1}\ar[r]\ar[dr]^{f^{n+1}:=\Id_{\X}^{n+1}}\ar[dd]^(0.6){g^{n+1}}
&
\cdots
&
\\
&
\cdots \ar[r]
&
  P^{n-2} \ar[rr]^(0.4){d_{\P}^{n-2}}\ar@{-->}[dl]^{h^{n-2}}
&&
  P^{n-1}\ar[rr]^(0.4){d_{\P}^{n-1}}\ar@{-->}[dl]^{h^{n-1}}
&&
  P^{n}\ar[rr]^(0.4){d_{\P}^{n}}\ar@{-->}[dl]^{h^n:=g^n}
&&
  P^{n+1}\ar[r]\ar@{-->}[dl]^{h^{n+1}:=g^{n+1}}
&
  \cdots
\\
\cdots \ar[r]
&
  Q^{n-2} \ar[rr]_{d_{\Q}^{n-2}} \ar@{->>}[dr]_{r^{n-2}}
&&
  Q^{n-1}\ar[rr]_{d_{\Q}^{n-1}} \ar@{->>}[dr]_{r^{n-1}}
&&
  Q^{n}\ar[rr]_{d_{\Q}^{n} }
&&
  Q^{n+1}\ar[r]
&
  \cdots
&
\\
&&\ker d_{\Q}^{n-1} \ar@{ >->}[ur]_{t^{n-1}}
&&
\ker d_{\Q}^{n} \ar@{ >->}[ur]_{t^{n}}
&&&&&
}$$
Since $d_{\Q}^{n}\cdot h^{n}d_{\P}^{n-1}
=h^{n+1}d_{\P}^{n}d_{\P}^{n-1}=0$,
we have $h^{n}d_{\P}^{n-1} = t^n \varphi^{n-1}$, where $\varphi^{n-1}:P^{n-1}\ra \ker d_{\Q}^{n}$.
Moreover, $\varphi^{n-1}$ can be lifted to $Q^{n-1}$ since $r^{n-1}$ is a $\lambda$-pure epimorphism and $P^{n-1}\in \lPP$.
That is, there exists a morphism $h^{n-1}: P^{n-1}\ra Q^{n-1}$, such that $\varphi^{n-1}=r^{n-1}h^{n-1}$.
Hence $h^nd_{\P}^{n-1}=t^n r^{n-1}h^{n-1}=d_{\Q}^{n-1}h^{n-1}$.
Inductively, we have $h^id_{\P}^{i-1}=d_{\Q}^{i-1}h^{i-1}$ for all $i\leq n-1$.
This implies that we get the cochain map $\h:\P\ra \Q$.

Next we check that the above $\h$ satisfies $\g= \h\f$ in $\K^{-}(\cal A)$.
For $i\geq n$, set $s^{i}=0$.
In this case, $g^{i}-h^if^i=0=d_{\Q}^{i-1}s^i+s^{i+1}d_{\X}^{i}$.
For $i\leq n-1$, we construct $s^i$ by induction.
Since $d_{\Q}^{n-1}(g^{n-1}-h^{n-1}f^{n-1})
=g^nd_{\X}^{n-1}-h^nd_{\P}^{n-1}f^{n-1}
=h^nd_{\X}^{n-1}-h^nd_{\X}^{n-1}=0$,
we have $g^{n-1}-h^{n-1}f^{n-1}=t^{n-1}\psi^{n-1}$, where $\psi^{n-1}:X^{n-1}\ra \ker d_{\Q}^{n-1}$.
Moreover, $\psi^{n-1}$ can be lifted to $Q^{n-2}$, that is, there exists a morphism $s^{n-1}: X^{n-1}\ra Q^{n-2}$, such that $\psi^{n-1}=r^{n-2}s^{n-1}$.
Hence we have
\begin{align*}
g^{n-1}-h^{n-1}f^{n-1}&=t^{n-1}r^{n-2}s^{n-1}\\
&=d_{\Q}^{n-2}s^{n-1}\\
&=d_{\Q}^{n-2}s^{n-1}+s^{n}d_{\X}^{n-1}.
\end{align*}

Furthermore, since
\begin{align*}
d_{\Q}^{n-2}s^{n-1}d_{\X}^{n-2}&=(g^{n-1}-h^{n-1}f^{n-1})d_{\X}^{n-2}\\
&=d_{\Q}^{n-2}g^{n-2}-h^{n-1}(d_{\P}^{n-2}f^{n-2})\\
&=d_{\Q}^{n-2}g^{n-2}-d_{\Q}^{n-2}h^{n-2}f^{n-2},
\end{align*}
we have $g^{n-2}-h^{n-2}f^{n-2}-s^{n-1}d_{\X}^{n-2}=d_{\Q}^{n-3}s^{n-2}$ with $s^{n-2}:X^{n-2}\ra Q^{n-3}$.
Therefore, we complete the construction of $\s$ by induction, and $\s:\g\sim \h\f$ as wanted.
\end{proof}

\begin{lemma}\label{lem404}
For any $\X \in \K_{\lambda}^{-,b}(\lPP)\simeq \D_{\lambda}^{b}(\cal A)$, if $\plgldA$ is finite, then $\X\in \K^{b}(\lPP)$.
\end{lemma}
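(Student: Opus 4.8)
The plan is to avoid truncating $\X$ by hand and instead produce a single bounded resolution that is at the same time homotopy equivalent to $\X$. Since $\X\in\K^{-,b}_{\lambda}(\lPP)\simeq\D_{\lambda}^{b}(\cal A)$, the complex is $\lambda$-pure acyclic in low degrees and bounded above, so $\infl\X$ is finite and $\X$ is a genuine object of $\D_{\lambda}^{b}(\cal A)$; in particular Theorem \ref{theorem global dimension} is available. Writing $\plgldA=N<\infty$, that theorem yields $\plpdA\X\leq N-\infl\X$.

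First I would feed this bound into Definition \ref{def206}: the inequality $\plpdA\X\leq N-\infl\X$ furnishes a $\lambda$-pure projective resolution $\f\colon\Q\ra\X$ with $Q^{i}=0$ for all $i<\infl\X-N$. Such a resolution lives in $\K^{-}(\lPP)$ by \cite[Theorem 4.9]{Wang}, hence is bounded above; combined with the lower bound just obtained, $\Q$ is bounded on both sides, that is $\Q\in\K^{b}(\lPP)$.

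The decisive second step is to upgrade the comparison map from an isomorphism in $\D_{\lambda}^{b}(\cal A)$ to an isomorphism in $\KA$. Here both $\Q$ and $\X$ are complexes of $\lambda$-pure projective objects, and $\f$ is by construction a $\lambda$-pure quasi-isomorphism; therefore \cite[Corollary 4.5]{Wang} promotes $\f$ to an honest homotopy equivalence. Consequently $\X\cong\Q$ in $\KA$ with $\Q\in\K^{b}(\lPP)$, which is exactly the assertion $\X\in\K^{b}(\lPP)$.

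I expect the main obstacle to be precisely this last promotion. It is easy to resolve $\X$ by a bounded complex of $\lambda$-pure projectives up to $\lambda$-pure quasi-isomorphism, but the target conclusion lives in the homotopy category $\KA$, so one must exploit that $\X$ \emph{itself} already consists of $\lambda$-pure projectives, not merely that it is $\lambda$-pure quasi-isomorphic to such a complex. A more hands-on alternative would decompose $\X$ along its syzygies: once $m\leq\infl\X-N$ the syzygy $Z^{m}=\ker d_{\X}^{m}$ is $\lambda$-pure projective, and the resulting $\lambda$-pure acyclic tail of $\lambda$-pure projectives is contractible by Proposition \ref{contractible}. The delicate point there, however, is that the associated short exact sequence of complexes need not be degreewise split at the cut (the bottom term of the truncation is an image object that is not visibly $\lambda$-pure projective), so one cannot immediately split off the contractible tail in $\KA$; this is exactly the difficulty that the resolution-plus-\cite[Corollary 4.5]{Wang} argument sidesteps, which is why I would favour it.
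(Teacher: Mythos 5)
Your proof is correct in substance but is routed differently from the paper's, which disposes of the lemma in one line (``clear by a truncation''): there, writing $-m=\infl\X$ and $N=\plgldA$, one observes that $\X$ is its own $\lambda$-pure projective resolution, so finiteness of $N$ forces a deep enough syzygy $\coker d_{\X}^{-m-N-1}$ to be $\lambda$-pure projective, whence the $\lambda$-pure acyclic tail splits off as a contractible direct summand exactly as in the implication (2)$\Rightarrow$(3) of the proposition characterizing $\plpdA\X\leq n$, and what remains is bounded. You instead build an auxiliary bounded resolution $\Q\ra\X$ and promote it to a homotopy equivalence via Proposition \ref{prop204} (its cone is a $\lambda$-pure acyclic complex of $\lambda$-pure projectives, hence zero in $\KA$); this is a legitimate alternative that reaches the conclusion without exhibiting the splitting. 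Two caveats. First, your justification that $\Q$ is bounded \emph{above} is misplaced: Definition \ref{def206} only supplies the lower bound $Q^{i}=0$ for $i<\infl\X-N$, and \cite[Theorem 4.9]{Wang} does not force the particular resolution witnessing $\plpdA\X\leq N-\infl\X$ to be bounded above. The repair is to take $\P=\X$ in the construction inside the proof of Theorem \ref{theorem global dimension}(1)$\Rightarrow$(2): splicing a length-$N$ resolution of $\coker d_{\X}^{-m-1}$ onto the upper, bounded part of $\X$ yields $\Q\in\K^{b}(\lPP)$ outright, together with the required $\lambda$-pure quasi-isomorphism $\Q\ra\X$. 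Second, the ``delicate point'' you raise against the truncation route is not a real obstruction: once the syzygy at the cut is known to be $\lambda$-pure projective, the surjection onto it from the adjacent term of $\X$ splits, the $\lambda$-pure acyclic tail becomes a contractible direct summand (Proposition \ref{contractible}), and $\X$ decomposes degreewise as a bounded complex plus a contractible one --- which is precisely the content of the paper's one-line proof.
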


\begin{proof}
It is clear by a truncation.
\end{proof}

Next we shall prove Theorem \ref{th401}.

\begin{proof}
Let $G$ be the composition functor $\K_{\lambda}^{-,b}(\lPP) \ra \K^{-}({\cal A})\ra \D^{-}(\cal A)$, where the first functor is an embedding functor and the second one is the Verdier quotient.
Then we know $G(\K_{ac}^{b}(\lPP))=0$ since $\K_{ac}^{b}(\lPP) \subseteq \K_{ac}^{-}(\cal A)$.
By the universal property of quotient functor, there exists a unique triangle functor $\overline{G}: \K_{\lambda}^{-,b}(\lPP)/ \K_{ac}^{b}(\lPP) \ra \D^{-}(\cal A)$.
Clearly, $\Im(\overline{G})\subseteq \D^{b}(\cal A)$.

For any $\X\in \K^{-,b}({\cal P})\simeq \D^{b}(\cal A)$, by Lemma \ref{lem405}, there exists $\P\in \K_{\lambda}^{-,b}(\lPP)$ such that $\overline{G}(\P)=\X$, that is, $\overline{G}$ is dense.

Let $\P_1, \P_2\in \K_{\lambda}^{-,b}(\lPP)$ and $\xymatrix@C=0.5cm{
  \P_1 \ar@{<=}[r]^{~\s} & \Z  \ar[r]^{\a} & \P_2}$ be a morphism in $\D^{b}(\cal A)$ with $\Z\in \K^{b}(\cal A)$, and $\s$ is a quasi-isomorphism which satisfies $\Cone(s) \in \K_{ac}^{b}(\lPP)$.
Then there exists a quasi-isomorphism $\uu:\P\ra \Z$ with $\P\in \K^{-,b}(\cal P)$ by \cite[Theorem 4.2.1]{ZP2015}.
Consider the morphisms $\s\uu: \P\ra \P_1$ and $\a\uu:\P\ra \P_2$, it follows from Lemma \ref{lem406} that there exist a quasi-isomorphism $\vv:\P\ra \Q$ with $\Q\in \K_{\lambda}^{-,b}(\lPP)$, and cochain maps $\t:\Q\ra \P_1$ and $\b: \Q\ra \P_2$, such that $\s\uu=\t\vv$ and $\a\uu=\b\vv$ in $\K^-(\cal A)$.
Then the following diagram
$$\xymatrix{
  & \Z \ar[dr]^{\a}  \ar@{=>}[dl]_{\s}    \\
\P_1 \ar@{<=}[r]^{~~\s\uu} & \P \ar@{=>}[u]_{\uu}\ar@{=>}[d]^{\vv} \ar[r]^{\a\uu~~} & \P_2,      \\
 &  \Q\ar@{=>}[ul]^{\t} \ar[ur]_{\b}   }
$$
commutes in $\K^-(\cal A)$, where the double arrows denote quasi-isomorphisms.
Moreover, $\t$ is also a quasi-isomorphism since both $\s\uu$ and $\vv$ are quasi-isomorphisms.
Then $\Cone(\t)$ is acyclic.
Since both $\Q$ and $\P_1$ lie in $\K_{\lambda}^{-,b}(\lPP)$, as a corollary of \cite[Proposition 3.8]{Wang}, we know $\Cone(\t)\in \K_{\lambda}^{-,b}(\lPP)$ .
It follows from Lemma \ref{lem404} that $\Cone(\t)\in \K_{ac}^{b}(\lPP)$.
Hence, the right fraction $$\b/\t\in \Hom_{\K_{\lambda}^{-,b}(\lPP)/ \K_{ac}^{b}(\lPP)}(\P_1, \P_2),$$
and $\overline{G}(\b/\t)=\b/\t=\a/\s$. This means that $\overline{G}$ is full.

At last, by \cite[page 446]{Rickard1989}, also see \cite[Proposition 1.5.2]{ZP2015}, the full triangle functor $\overline{G}$ is faithful so long as it does not take any non-zero objects to zero.
Assume $\P\in \K_{\lambda}^{-,b}(\lPP)$ and $\overline{G}(\P)=0$.
Then $\P$ is acyclic by the definition of $\overline{G}$.
Hence, $\P\in \K_{ac}^{b}(\lPP)$ by Lemma \ref{lem404},
and we complete the proof.
\end{proof}

\section{$\lambda$-pure singularity categories}

Let $\cal A$ be an abelian category with enough projective objects,
and $\cal P$ the full subcategory of $\cal A$ consisting of all projective objects.
The ordinary singularity category of $\cal A$ is denoted by a Verdier quotient category $\D_{sg}^{b}{(\cal A)}:=\D^{b}{(\cal A)}/\K^{b}(\cal P)$.
Similarly, we can define the $\lambda$-pure version of singularity category as follows.

\begin{definition}
The $\lambda$-pure singularity category is defined to be the Verdier quotient
$$\DlsgA:=\D_{\lambda}^{b}{(\cal A)} / \K^{b}(\lPP)\simeq \K_{\lambda}^{-,b}{(\lPP)} / \K^{b}(\lPP).$$
\end{definition}

\begin{proposition}
The $\lambda$-pure singularity category $\DlsgA=0$ if and only if the $\lambda$-pure global dimension $\plgldA$ is finite.
\end{proposition}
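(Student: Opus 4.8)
The plan is to work with the second description recorded in the definition, namely the triangle equivalence $\DlsgA \simeq \K_{\lambda}^{-,b}(\lPP)/\K^{b}(\lPP)$, and to prove the two implications separately. Throughout I use that $\K^{b}(\lPP)$ is a thick subcategory of $\K_{\lambda}^{-,b}(\lPP)$, so that the quotient is a genuine Verdier quotient in which an object vanishes precisely when it already lies in $\K^{b}(\lPP)$. Consequently the statement $\DlsgA = 0$ is equivalent to the equality of subcategories $\K_{\lambda}^{-,b}(\lPP) = \K^{b}(\lPP)$ (up to isomorphism in $\KA$), and the whole proof reduces to comparing these two subcategories.

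For the implication $\plgldA < \infty \Ra \DlsgA = 0$ I would invoke Lemma \ref{lem404} directly. It asserts that when $\plgldA$ is finite, every $\X \in \K_{\lambda}^{-,b}(\lPP) \simeq \DlAs{b}$ already belongs to $\K^{b}(\lPP)$. Hence the inclusion $\K^{b}(\lPP) \hookrightarrow \K_{\lambda}^{-,b}(\lPP)$ is essentially surjective, the two subcategories coincide, and the Verdier quotient $\K_{\lambda}^{-,b}(\lPP)/\K^{b}(\lPP)$ is zero. This is the routine direction.

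For the converse $\DlsgA = 0 \Ra \plgldA < \infty$, the equality $\K_{\lambda}^{-,b}(\lPP) = \K^{b}(\lPP)$ first yields pointwise finiteness: for any $X \in \cal A$, a $\lambda$-pure projective resolution $\P \ra X$ is an object of $\K_{\lambda}^{-,b}(\lPP)$, since it is $\lambda$-pure acyclic in all degrees but one, hence it is isomorphic in $\KA$ to a bounded complex of $\lambda$-pure projectives; that bounded complex is still a $\lambda$-pure projective resolution of $X$, so $\plpdA X < \infty$. The difficulty is that finiteness of each $\plpdA X$ does not by itself bound the supremum defining $\plgldA$.

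The main obstacle is therefore to upgrade this pointwise finiteness to a uniform bound, and the plan is to exploit the coproducts available in the Grothendieck category $\cal A$. Arguing by contradiction, suppose $\plgldA = \infty$; by the previous paragraph one may select objects $X_{n} \in \cal A$ with $\plpdA X_{n} \geq n$ and form $X = \coprod_{n} X_{n}$. I would then show $\plpdA\bigl(\coprod_{n} X_{n}\bigr) = \sup_{n} \plpdA X_{n} = \infty$, contradicting the pointwise finiteness applied to $X$. Establishing this coproduct formula is the delicate technical point: it requires that $\lPP$ be closed under coproducts (each $\lambda$-pure projective is a retract of a coproduct of $\lambda$-presentable objects, and this class is visibly closed under coproducts) and, more critically, that a coproduct of $\lambda$-pure exact sequences again be $\lambda$-pure, so that the coproduct of the chosen resolutions is a $\lambda$-pure projective resolution of $X$ whose length realizes $\sup_{n}\plpdA X_{n}$. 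Granting these closure properties, the coproduct of resolutions computes $\plpdA X$ as the supremum, the contradiction follows, and hence $\plgldA$ is finite.
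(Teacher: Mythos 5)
Your forward direction is exactly the paper's: both invoke Lemma \ref{lem404} to conclude that $\K_{\lambda}^{-,b}(\lPP)$ coincides with $\K^{b}(\lPP)$ when $\plgldA<\infty$, so the quotient vanishes. For the converse the routes genuinely diverge in execution. The paper argues concretely: given $M\in\calA$ with $M=0$ in $\DlsgA$, it produces an isomorphism $M\cong\X$ in $\DlAs{b}$ with $\X\in\K^{b}(\lPP)$, realizes it as a right fraction, inverts one leg up to homotopy via \cite[Corollary 4.4]{Wang}, and truncates $\X$ at degree $0$, using Proposition \ref{contractible} to see that $\ker d_{\X}^{0}$ is $\lambda$-pure projective; this yields a bounded $\lambda$-pure projective resolution of $M$ --- precisely the pointwise finiteness you extract more abstractly from the equality $\K_{\lambda}^{-,b}(\lPP)=\K^{b}(\lPP)$ and thickness of $\K^{b}(\lPP)$. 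The paper then passes directly from ``each $M$ has finite $\lambda$-pure projective dimension'' to ``$\plgldA<\infty$'' without comment, whereas you correctly observe that Definition \ref{def208} is a supremum and supply the coproduct argument to exclude unbounded pointwise dimensions; this is a genuine strengthening of the written proof, and the closure properties you flag do hold in a Grothendieck category (e.g.\ a coproduct of $\lambda$-pure exact sequences is a $\lambda$-directed colimit of sub-coproducts indexed by subsets of cardinality $<\lambda$, reducing to colimits of split sequences). One small repair is needed in your last step: the justification you give (``the coproduct of the resolutions is a resolution realizing the supremum'') only yields $\plpdA\bigl(\coprod_{n}X_{n}\bigr)\leq\sup_{n}\plpdA X_{n}$, while the contradiction requires the reverse inequality $\plpdA\bigl(\coprod_{n}X_{n}\bigr)\geq\plpdA X_{n}$ for each $n$. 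This follows from the paper's unlabeled Corollary characterizing $\plpdA X\leq m$ by the $\lambda$-pure projectivity of every $m$-th syzygy, together with closure of $\lPP$ under direct summands: the $m$-th syzygy of $X_{n}$ is a direct summand of the $m$-th syzygy of the coproduct. With that sentence added, your argument is complete and, on this point, more careful than the paper's own.
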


\begin{proof}
By Lemma \ref{lem404}, we have $\D_{\lambda}^{b}{(\cal A)}\simeq \K^{b}(\lPP)$ when $\plgldA$ is finite, that means $\DlsgA=0$.

Suppose $\DlsgA=0$. Let $M\in \cal A$, and $M=0$ in $\DlsgA$.
Then there exists some $\X\in \K^{b}(\lPP)$ such that $M\cong \X$ in $\D_{\lambda}^{b}{(\cal A)}$.
Denote this isomorphism by a right fraction
$\xymatrix@C=0.5cm{
  M \ar@{<=}[r]^{~f} & \Y  \ar[r]^{\alpha} & \X}$,
where $f$ and $\alpha$ are $\lambda$-pure quasi-isomorphisms.
By \cite[Corollary 4.4]{Wang}, there exists a $\lambda$-pure quasi-isomorphism $\beta: \X\ra \Y$ such that $\alpha\beta\sim \Id_{\X}$.
So $f\beta:\X\ra M$ is also a $\lambda$-pure quasi-isomorphism, and hence $\X$ is a $\lambda$-pure acyclic complex in degree $\neq 0$.
Consider the truncation
$$\xymatrix@C=0.5cm{
 \tau_{\leq 0}\X  & \cdots \ar[r] & X^{-2} \ar[r] & X^{-1} \ar[r] & \ker d_{\X}^{0} \ar[r] & 0 }
$$
Then the composition $\tau_{\leq 0}\X\ra \X\ra M$ is a $\lambda$-pure quasi-isomorphism.
Since $\X\in \K^{b}(\lPP)$, we assume $X^i=0$ for all $i>n$.
Then we have a $\lambda$-pure acyclic complex $0\ra \ker d_{\X}^0 \ra X^0\ra \cdots \ra X^n \ra 0$ with $X^{i}\in \lPP$.
Hence it is contractible by Proposition \ref{contractible}.
Then $\ker d_{\X}^0 \in \lPP$ since $\lPP$ is closed under direct summands.
That means $\tau_{\leq 0}\X\ra M$ is a bounded $\lambda$-pure projective resolution, and then $\plgldA<\infty$.
\end{proof}

%Next we will consider the existence of $\lambda$-pure version of Buchweitz-Happel Theorem.
There is a classic result about the general singularity category, i.e., Buchweitz-Happel Theorem. Next, we want to explore whether there is a $\lambda$-pure version of  Buchweitz-Happel Theorem. We briefly describe the classic Buchweitz-Happel Theorem at first.

Let $\cal A$ be an abelian category with enough projective objects, and $\cal P$ is the class of all projective objects.
An object $X\in \cal A$ is called Gorenstein projective if there exists an exact complex
$$\P: \cdots\ra P^{-1} \ra P^0 \ra P^{1} \ra \cdots$$
with $P^i\in \cal P$ for all integers $i$, such that $X \cong \Im(P^{-1}\ra P^0)$
and $\Hom_{\cal A}(Q,\P)$ is acyclic for any $Q\in \cal P$.
Denote by $\cal{G(A)}$ the full subcategory consisting of all Gorenstein projective objects in $\cal A$.
Then it is a Frobenius category with relative projective-injective objects being projective objects. %(see literatures \cite[Proposition 5.1]{Auslander1991} and \cite{Holm2004}).
Hence, the stable category $\underline{\cal{G(A)}}$ of $\cal{G(A)}$ modulo $\cal P$ is a triangulated category by Happel \cite{Happel1987, Happel1988}.
Then there exists the following result, which is called Buchweitz-Happel Theorem and its inverse.

\begin{theorem}\label{theoremB-H}\cite[4.4.1]{Buchweitz}\cite{Happel1991}
Let $\cal A$ be an abelian category with enough projective objects.
Then the canonical functor $$F: \underline{\cal{G(A)}} \ra \D_{sg}^{b}(\cal A)$$ is a fully faithful triangle functor.
Moreover, $F$ is a triangulated equivalent if and only if the Gorenstein projective dimension of each object in $\cal A$ is finite, where the ``only if" part holds by Zhu\cite{zhu2012}.
\end{theorem}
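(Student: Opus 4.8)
The plan is to construct the functor $F$ first, then establish full faithfulness, and finally characterize when $F$ is dense. To build $F$, I start from the inclusion $\cal{G(A)} \hookrightarrow {\cal A}$ and compose with the canonical chain ${\cal A} \ra \D^b(\cal A) \ra \D_{sg}^b(\cal A)$. Every projective object becomes zero in $\D_{sg}^b(\cal A)$, and a morphism between Gorenstein projectives factoring through a projective object is sent to zero, so this composite factors through the stable category $\underline{\cal{G(A)}}$, yielding $F$. To see that $F$ is a triangle functor, I would check that it intertwines the two suspensions: in $\underline{\cal{G(A)}}$ the shift $G[1]$ is the cosyzygy $\Omega^{-1}G = \coker(G \ra P^0)$ read off from the complete resolution of $G$, while in $\D_{sg}^b(\cal A)$ it is the usual one; the defining exact sequence $0 \ra G \ra P^0 \ra \Omega^{-1}G \ra 0$ becomes a distinguished triangle whose middle term $P^0$ vanishes in $\D_{sg}^b(\cal A)$, so $\Omega^{-1}G \cong G[1]$ there, and the Happel triangles of the Frobenius category $\cal{G(A)}$ map to distinguished triangles.

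The central and hardest step is full faithfulness. Since cosyzygies of Gorenstein projectives are again Gorenstein projective, every object of $\underline{\cal{G(A)}}$ is (isomorphic to) a stalk Gorenstein projective object, so it suffices to prove $\underline{\Hom}(G, G') \cong \Hom_{\D_{sg}^b(\cal A)}(G, G')$ for all $G, G' \in \cal{G(A)}$, compatibly with suspension. My approach is to compute the right-hand side using the model $\D^b(\cal A) \simeq \K^{-,b}(\cal P)$ together with complete resolutions: each Gorenstein projective admits a totally acyclic complex of projectives $T_G$ with $G$ a syzygy, and the key orthogonality input is $\Ext^i_{\cal A}(G, P) = 0$ for $i > 0$ and $P$ projective (equivalently $\HomA(Q, T_G)$ acyclic for $Q \in \cal P$). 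Using this I would show that a morphism $G \ra G'$ in the Verdier quotient $\D^b(\cal A)/\K^b(\cal P)$, represented by a roof whose denominator has cone in $\K^b(\cal P)$, lifts uniquely up to homotopy to a chain map of complete resolutions, and that two such agree in $\D_{sg}^b(\cal A)$ exactly when they differ by a map factoring through projectives. Concretely this identifies $\Hom_{\D_{sg}^b(\cal A)}(G, G')$ with the stabilized group $\varinjlim_n \underline{\Hom}(\Omega^n G, \Omega^n G')$; since the syzygy operator is an equivalence on $\underline{\cal{G(A)}}$, every transition map is an isomorphism and the colimit collapses to $\underline{\Hom}(G, G')$.

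For the equivalence, I would use that a fully faithful triangle functor is an equivalence precisely when it is dense. Assuming every object of ${\cal A}$ has finite Gorenstein projective dimension, I take any $\X \in \D^b(\cal A) \simeq \K^{-,b}(\cal P)$, realize it as a bounded-above complex of projectives, and use finiteness of the Gorenstein projective dimension together with a truncation (a sufficiently high syzygy of a bounded complex is Gorenstein projective) to conclude that $\X$ is isomorphic in $\D_{sg}^b(\cal A)$ to a stalk Gorenstein projective object, hence lies in the image of $F$; thus $F$ is dense and an equivalence. For the converse (due to Zhu), I would argue that if $F$ is dense then each stalk $M \in {\cal A}$ is isomorphic in $\D_{sg}^b(\cal A)$ to some $G \in \cal{G(A)}$, and that unwinding such an isomorphism through its defining roof produces a finite Gorenstein projective resolution of $M$, so the Gorenstein projective dimension of every object is finite.

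The main obstacle is the full-faithfulness computation: the delicate point is controlling the calculus of fractions in the Verdier quotient and proving that maps out of a Gorenstein projective into a perfect complex do not contribute, which rests essentially on total acyclicity of the complete resolution and the vanishing of positive $\Ext$ into objects of finite projective dimension. By comparison, the triangle-functor verification and the density argument are routine once the Frobenius structure on $\cal{G(A)}$ and the truncation lemma are in hand.
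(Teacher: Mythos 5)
This theorem is stated in the paper purely as a cited classical result (Buchweitz 4.4.1, Happel, and Zhu for the converse); the paper supplies no proof of its own, so there is nothing internal to compare against. Your outline is, in substance, the standard argument from those sources: the factorization of $\cal{G(A)}\ra \D^{b}({\cal A})\ra \D_{sg}^{b}(\cal A)$ through the stable category, the identification of the two suspensions via the triangle on $0\ra G\ra P^0\ra \Omega^{-1}G\ra 0$ with middle term killed in $\D_{sg}^{b}(\cal A)$, the computation $\Hom_{\D_{sg}^{b}(\cal A)}(G,G')\cong \varinjlim_n\underline{\Hom}(\Omega^nG,\Omega^nG')$ collapsing because $\Omega$ is an auto-equivalence of $\underline{\cal{G(A)}}$, and density via the fact that a sufficiently deep syzygy of a bounded complex is Gorenstein projective when Gorenstein projective dimensions are finite. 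All of this is correct in outline.

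The one place where your sketch understates the work is the converse (Zhu's direction). ``Unwinding the roof'' of an isomorphism $M\cong G$ in $\D_{sg}^{b}(\cal A)$ does not directly hand you a finite Gorenstein projective resolution of $M$: the roof $M\Leftarrow \Z\ra G$ gives two triangles in $\D^{b}({\cal A})$ whose third terms lie in $\K^{b}(\cal P)$, and to conclude that $M$ has finite Gorenstein projective dimension you need to know that the full subcategory of complexes of finite Gorenstein projective dimension is closed under the relevant cones (equivalently, that finite Gorenstein projective dimension behaves well in triangles with perfect complexes). Establishing that closure is the actual content of Zhu's contribution, and your proposal should name it as a lemma rather than fold it into ``unwinding.'' With that caveat, the proposal is a faithful reconstruction of the classical proof.
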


\begin{remark}
Chen\cite{Chen2011}, and Li-Huang\cite{Lihuanhuan2015, Lihuanhuan2020} generalized the above Theorem, and they got the relative version of Buchweitz-Happel Theorem, respectively.
Moreover, Gao-Zhang considered the Gorenstein version in \cite{GZ2010}.
And  Bao \emph{et.al.}\cite{Bao2015} introduced the Gorenstein version.
Cao \emph{et.al.} \cite{Cao2019} explored the pure version.
\end{remark}

In order to consider the $\lambda$-pure version of Buchweitz-Happel Theorem,
we define a Gorenstein category with respect to $\lambda$-pure projective objects, denoted by $\GlPP$, that is, $M\in \GlPP$, if there exists an exact complex $\cdots\ra P^{-1} \ra P^0 \ra P^{1} \ra \cdots$ with every $P^i \in \lPP$, such that $M \cong \Im(P^{-1}\ra P^0)$ and the above sequence is still exact after applying $\Hom_{\cal A}(\lPP,-)$ and $\Hom_{\cal A}(-,\lPP)$, respectively.
However, in this case, we immediately get the following result, which directly leads to that the general construction of Buchweitz-Happel Theorem is invalid for the $\lambda$-pure version.

\begin{proposition}
Let $\cal A$ be a Grothendieck category, then $\GlPP= \lPP$.
\end{proposition}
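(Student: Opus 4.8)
The plan is to establish the two inclusions $\lPP\subseteq\GlPP$ and $\GlPP\subseteq\lPP$ separately, with essentially all of the content residing in the second. For the first inclusion I would simply observe that any $M\in\lPP$ admits the trivial complete resolution $\cdots\ra 0\ra M\stackrel{\Id}{\ra}M\ra 0\ra\cdots$, placed in degrees $-1$ and $0$: it is exact, all of its terms lie in $\lPP$, one has $M\cong\Im(\Id_M)$, and applying either $\Hom_{\cal A}(\lPP,-)$ or $\Hom_{\cal A}(-,\lPP)$ returns a complex of the same shape $\cdots\ra 0\ra A\stackrel{\Id}{\ra}A\ra 0\ra\cdots$, which is again exact. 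Hence $M\in\GlPP$.

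For the reverse inclusion, let $M\in\GlPP$ be witnessed by an exact complex $\P\colon\cdots\ra P^{-1}\stackrel{d^{-1}}{\ra}P^0\stackrel{d^0}{\ra}P^1\ra\cdots$ of $\lambda$-pure projectives with $M\cong\Im(d^{-1})$. The key move, which I expect to carry the whole argument, is to recognize that the hypothesis ``$\P$ stays exact after applying $\Hom_{\cal A}(\lPP,-)$'' says exactly that $\Hom_{\cal A}(Q,\P)$ is acyclic for every $Q\in\lPP$; by the characterization of $\lambda$-pure acyclicity recalled in Section~2, this is precisely the statement that $\P$ is $\lambda$-pure acyclic. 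Since in addition $\P\in\K(\lPP)$, Proposition~\ref{prop204} then forces $\P=0$ in $\KA$, i.e.\ $\P$ is contractible. It is worth noting that the second defining condition, exactness under $\Hom_{\cal A}(-,\lPP)$, is not actually needed here.

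It then remains to read off $M\in\lPP$ from contractibility. I would fix a contracting homotopy $s=(s^i\colon P^i\ra P^{i-1})$ with $\Id_{P^i}=d^{i-1}s^i+s^{i+1}d^i$ and consider the endomorphism $e:=d^{-1}s^0$ of $P^0$. A short computation gives $d^0e=d^0d^{-1}s^0=0$, so $\Im(e)\subseteq\ker d^0$, while for $x\in\ker d^0=\Im(d^{-1})\cong M$ the homotopy identity yields $ex=x-s^1d^0x=x$; thus $e$ is idempotent with image $\ker d^0\cong M$. Consequently $M$ is a direct summand of $P^0\in\lPP$, and since $\lPP$ is closed under direct summands (as already used in Section~4), we conclude $M\in\lPP$. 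The only genuine obstacle is the conceptual one of identifying the $\Hom_{\cal A}(\lPP,-)$-exactness hypothesis with $\lambda$-pure acyclicity, so as to bring the strong rigidity statement of Proposition~\ref{prop204} to bear; once that identification is made, contractibility together with closure of $\lPP$ under summands finishes the argument immediately.
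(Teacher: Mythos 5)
Your proposal is correct and follows essentially the same route as the paper: identify the $\Hom_{\cal A}(\lPP,-)$-exactness hypothesis with $\lambda$-pure acyclicity, invoke Proposition \ref{prop204} to conclude the complex is contractible, and read off $M$ as a direct summand of $P^0$. You merely spell out the easy inclusion and the idempotent extraction of the summand in more detail than the paper does.
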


\begin{proof}
$\lPP\subseteq \GlPP$ is clear.
Conversely, let $M\in \lPP$, then the sequence $\cdots\ra P^{-1} \ra P^0 \ra P^{1} \ra \cdots$ in the definition of $\GlPP$ is in fact $\lambda$-pure acyclic, hence it is contractible by Proposition \ref{prop204}.
And we get $M$ is a direct summand of $P_0$, so it is a $\lambda$-pure projective object.
\end{proof}

In homological algebra, projective and injective are often dual, we know that if $\cal A$ is an abelian category with enough injective objects, then the classical derived categories $\DA$ can be  represented by injective objects.
As for $\lambda$-pure derived category $\DlA$, if the underlying category $\cal A$ is a Grothendieck category with enough $\lambda$-pure injective objects, then $\DlA$ also can be represented by $\lambda$-pure injective objects.
This is dual to $\lambda$-pure projective case in \cite{Wang}, we omit relevant description.
In fact, the previous claim about ``with enough $\lambda$-pure injective objects" is very complicated.
Rosick\'{y} \cite[Question 1]{Rosicky} set a question: Does there exist a regular cardinal $\lambda$ such that a locally presentable additive category has enough $\lambda$-pure injectives?
There seems to be limited research about it at present, and we only find Izurdiaga-\u{S}aroch \cite[Theorem 6.6]{Izurdiaga2021} proved that a ring $R$ is right pure-semisimple if and only if there exists a regular uncountable $\lambda$ such that the category Mod-$R$ has enough $\lambda$-pure injective objects.

If we roughly assume that the answer to Rosick\'{y} \cite[Question 1]{Rosicky} is affirmative, i.e., the Grothendieck category, which is locally presentable by \cite[page 2]{Rosicky}, has enough $\lambda$-pure injective objects.
Whether the $\lambda$-pure version of Buchweitz-Happel Theorem can be verified by $\lambda$-pure injective objects?
This is still uncertain, it depends entirely on a conjecture: A chain complex $\bm{F}$ is $\lambda$-pure acyclic if and only if any chain map $\bm{F}\ra \bm{E}$ is null-homotopic, where $\bm{E}$ is a complex with $\lambda$-pure injective objects.
Note this conjecture is the $\lambda$-pure version of \cite[Conjecture 3.4 ]{Daniel2017}.
If the conjecture is negative, then we may construct the $\lambda$-pure version of Buchweitz-Happel Theorem as is in the classic case, which is also similar to Chen and Li-Huang.
Else if the conjecture is positive, then it is dual to the $\lambda$-pure projective case. Therefore the general method is not feasible too.

~\\
\textbf{Author contributions}. All the co-authors have contributed equally in all aspects of the preparation of this submission.
~\\
\textbf{Conflict of interest statement.} The authors declare that they have no known competing financial interests or personal relationships that could have appeared to influence the work reported in this paper.
~\\
\textbf{Funding.} This work is supported by National Natural Science Foundation of China(12071120).
~\\
\textbf{Data availability.} No data was used for the research described in the article.

\end{document}